\newcommand{\R}{\mathbb R}
\newcommand{\HO}{H}
\newcommand{\HP}{H_D}
\newcommand{\RL}{{\mathcal R}}
\newcommand{\NC}{{k_0}}%NC : neighbours count
\newcommand{\MC}{{k_1}}%NC : multiplicity count
\newcommand{\Gw}{G_i^{\bot_{B_i}}}
\newcommand{\bw}{b_{G_i^{\bot_{B_i}}}}
\newcommand{\Bw}{B_{G_i^{\bot_{B_i}}}}
\newcommand{\ccg}{\cellcolor{green!40}}
\newcommand{\cco}{\cellcolor{orange!50}}
\newcommand{\ccr}{\cellcolor{red!50}}
\crefname{hypothesis}{Hypothesis}{Hypotheses}
\crefname{gevp}{GEVP}{GEVPs}
\title{A robust and adaptive GenEO-type domain decomposition preconditioner for $\mathbf{H}(\mathbf{\lowercase{curl}})$ problems in three-dimensional general topologies\thanks{Revised 16 May 2025.
		\funding{The first two authors gratefully acknowledge support from the EPSRC grant EP/S004017/1.}}}
\author{Niall~Bootland\thanks{STFC Rutherford Appleton Laboratory, Harwell Campus, Chilton, OX11 0QX, UK
		(\email{niall.bootland@stfc.ac.uk}).}
	\and Victorita~Dolean\thanks{Department of Mathematics and Statistics, University of Strathclyde, Livingstone Tower, 26 Richmond Street, Glasgow, G1 1XH, UK \emph{and} Laboratoire J.A.~Dieudonn\'e, CNRS, University C\^ote d'Azur, Parc Valrose, 28 Avenue Valrose, Nice, 06108, France
		(\email{work@victoritadolean.com}).}
	\and Fr\'ed\'eric~Nataf\thanks{Laboratoire J.L.~Lions, Sorbonne University, 4 Place Jussieu, Paris, 75005, and INRIA Team Alpines, France
		(\email{frederic.nataf@sorbonne-universite.fr}, \email{pierre-henri.tournier@sorbonne-universite.fr}).}
	\and Pierre-Henri~Tournier\footnotemark[4]}
\DeclareMathOperator{\diag}{diag}
\begin{document}

\maketitle
% REQUIRED
\begin{abstract}
	In this paper we design, analyse and test domain decomposition methods for linear systems of equations arising from conforming finite element discretisations of positive Maxwell-type equations, namely for $\mathbf{H}(\mathbf{curl})$ problems. It is well known that convergence of domain decomposition methods rely heavily on the efficiency of the coarse space used in the second level. We design adaptive coarse spaces that complement a near-kernel space made from the gradient of scalar functions. The new class of preconditioner is inspired by the idea of subspace decomposition, but based on spectral coarse spaces, and is specially designed for curl-conforming discretisations of Maxwell's equations in heterogeneous media on general domains which may have holes. We also address the practical robustness of various solvers in the case of non-trivial topologies and/or high aspect ratio of the domain.
\end{abstract}

% REQUIRED
\begin{keywords}
	Maxwell equations, domain decomposition methods, two-level methods, coarse spaces, low frequency problems
\end{keywords}

% REQUIRED
\begin{MSCcodes}
	65N55, 65N35, 65F10
\end{MSCcodes}

\section{Motivation}
\label{sec:motivation}

In this work we focus on the efficient solution of the following Maxwell problem:
\begin{equation}
	\label{eq:positivemaxwell}
	\begin{aligned}
		\nabla \times (\mu^{-1}\nabla\times \boldsymbol{E} ) + \gamma \varepsilon \boldsymbol{E} &= \boldsymbol{f} & &\text{in } \Omega,\\
		\boldsymbol{E} \times \boldsymbol{n} &= 0 & &\text{on } \partial\Omega.
	\end{aligned}
\end{equation}
Here $\boldsymbol{E}$ is the vector-valued electric field, $\boldsymbol{f}$ is a source term, while coefficients $\mu$ and $\varepsilon$ are electromagnetic parameters which are uniformly bounded and strictly positive but which we allow to be heterogeneous. Further, $\gamma>0$ is a positive parameter which is allowed to be very small. Let $\Omega$ denote the computational domain (possibly non-convex and not simply connected) and $\boldsymbol{n}$ denote the outward normal to $\partial\Omega$. For ease of presentation we consider the Dirichlet case in \eqref{eq:positivemaxwell} but analogous results apply for other boundary conditions, as examples in our numerical results will show. Upon appropriate discretisation we must solve a linear system $A \mathbf{E} = \mathbf{f}$. This problem, although positive definite, remains challenging from the solution methods point of view. To date, the reference approach for tackling this problem is the celebrated algorithm of Hiptmair--Xu from \cite{Hiptmair:2007:NSP}, which was identified by the U.S. Department of Energy in a 2008 report as one of the top ten recent breakthroughs in computational science.\footnote{\href{https://science.osti.gov/-/media/ascr/pdf/program-documents/docs/Breakthroughs_2008.pdf}{Report of The Panel on Recent Significant Advancements in Computational Science.}} This preconditioner is based upon a splitting of the space (giving a Helmholtz decomposition), here $\mathbf{H}_0(\mathbf{curl},\Omega)$ for $\Omega$ convex, by isolating the kernel of the curl operator
\begin{align*}
	\mathbf{H}_0(\mathbf{curl},\Omega) = (H_0^1(\Omega))^3 + \mathop\mathbf{grad} H_0^1(\Omega).
\end{align*}
The auxiliary space $H_0^1(\Omega)$ then uses a nodal (i.e., Lagrangian) discretisation. The matrix version of the preconditioner is given as
\begin{align*}
	\mathcal{M}^{-1}_\mathrm{AMS} = \diag(A)^{-1} + P ( \widetilde{L} + \gamma \widetilde{Q})^{-1} P^T + \gamma^{-1} C L ^{-1} C^T,
\end{align*}
where $\widetilde{L}$ is the nodal discretisation of the $\mu^{-1}$-weighted vector Laplacian operator, $\widetilde{Q}$ is the nodal $\varepsilon$-weighted vector mass matrix, $P$ is the matrix form of the nodal interpolation operator between the N\'ed\'elec space and nodal element space, $L$ is the nodal $\varepsilon$-weighted scalar Laplacian, and $C$ is the ``gradient matrix'', which is exactly the null space matrix of the discretisation of the curl--curl operator. This approach is known by several different names, the Hiptmair--Xu (HX) preconditioner, the (nodal) auxiliary space preconditioner (ASP), and the auxiliary-space Maxwell solver (AMS); here we will use the abbreviation AMS, as in \cite{kolev2009parallel} where a parallel implementation is considered and tested along with similar variants. The robustness of the AMS preconditioner with respect to the mesh size, heterogeneities in the coefficients and the topology of the domain has been thoroughly studied from both theoretical and practical points of view; see, e.g., \cite{hiptmair2006auxiliary,Hiptmair:2007:NSP,Hiptmair:2020:review,Xu:OMM:2009,Xu:RBH:2011,kolev2009parallel,hiptmair2019discrete,ayuso2017auxiliary,hu2021convergence,hu2023convergence}. Nonetheless, to the best of our knowledge there is no theoretical guarantee that the AMS preconditioner is efficient for elongated domains with holes, such as the one exhibited in Fig.~\ref{fig:holes} (see \Cref{sec:numerical_results}). Our numerical results in \Cref{sec:numerical_results} show that, indeed, the iteration counts of AMS increase in such cases; see \Cref{tab:beam_holes_both,tab:gamma_holes_both}.

A natural question is then whether we can propose a preconditioner that would remedy this issue. We choose here to consider this question from the point of view of domain decomposition methods, especially in order to address non-trivial topologies. Overlapping Schwarz methods were considered in \cite{toselli2000overlapping,pasciak2002overlapping}, for which a best condition number bound was very recently proved in \cite{liang:2023:sharp}. Non-overlapping and substructuring methods are considered in \cite{hu2003nonoverlapping,hu2004substructuring,toselli2006dual,dohrmann2012iterative}, with some analysis in the case of jumps in the physical parameters provided in \cite{hu2013discrete}, although under somewhat restrictive assumptions. Further, a non-overlapping BDDC algorithm is constructed in \cite{dohrmann2016bddc} and gives improved bounds in relation to the ratio between coarse and fine meshes but still with dependence on the physical parameters akin to \cite{toselli2006dual}; see also results in \cite{zampini2018balancing}. Within overlapping domain decomposition methods, in order to obtain a robust approach a second level is typically required. A good way to provide appropriate information is via spectral coarse spaces, exemplified by the GenEO approach \cite{spillane2014abstract,Dolean:2015:IDDSiam}, which use local generalised eigenvalue problems (GEVPs) and can provide robustness for highly heterogeneous problems. The approach has also seen theoretical extensions to indefinite and nonself-adjoint problem \cite{bootland2022geneo,bootland2023overlapping} as well as to saddle point systems \cite{nataf2023geneo}. It has also been adapted for wave propagation in the form of high-frequency heterogeneous Helmholtz problems \cite{bootland2021comparison,bootland2022can}; see also the more general overview for wave propagation problems in \cite{bootland2023several}. We additionally mention an alternative approach called MS-GFEM \cite{babuska2011optimal,ma2022novel,ma2022error}, which uses optimal local approximations spaces through use of local eigenproblems defined on generalised harmonic spaces and has also been applied to the high-frequency heterogeneous Helmholtz problem \cite{ma2023wavenumber}. 

Given the success of spectral coarse spaces, such an approach is natural to consider here in the case of the positive Maxwell equation. Note that, since the operator is symmetric positive definite, the GenEO theory developed in \cite{spillane2014abstract} applies directly, but in practice a difficulty arises in solving the local eigenproblems required to build the coarse space. Indeed, the purpose of the local GEVPs is to capture the local components of the near-kernel of the global problem. In our present case of the Maxwell problem \eqref{eq:positivemaxwell} and for small $\gamma$, the gradients of $H^1$ functions contribute to the sought coarse space since their curl is zero. There are many such functions and an iterative eigenvalue solver has difficulties finding all of them. Thus, we \emph{a priori} include the gradients of $H^1$ functions in the coarse space, but even this coarse space is not always sufficient. For instance, when the domain has a non-trivial topology (e.g., holes in the domain), which features naturally in various practical applications, the kernel of the curl operator is larger than the gradient of $H^1$ functions. It can be computed but requires some effort; see \cite{Pellikka:2013:HCC}. Moreover, even when the complete kernel of the curl operator is known, it may not be sufficient to build a good coarse space, such as when there are jumps in the coefficients.

With this in mind, our proposed solution is to start from some approximate knowledge of appropriate coarse space vectors, namely through the gradients of $H^1$ functions, and enrich this space by locally solving GEVPs in each subdomain. This ensures the size of the enrichment is small and so computable by the eigensolver. Our approach yields an adaptive coarse space in the GenEO spirit, but with modification required to account for the large near-kernel of the global problem. Overall, our main contributions in this work are:
\begin{enumerate}
	\item A general framework for the extension of the GenEO domain decomposition method to cases where the analytic near-kernel of the problem is large.
	\item An application of the framework to the special case of the Maxwell system.
	\item Extensive numerical tests and comparisons with the AMS preconditioner which show robustness with respect to:
	\begin{itemize}
		\item the geometry and its topology;
		\item how close the problem is to being singular due to a small zeroth-order term given by the value of $\gamma$. 
	\end{itemize}
\end{enumerate}
Our method ensures robustness across a wide range of challenging scenarios in terms of iteration counts, as seen in the numerical results of \Cref{sec:numerical_results}, and to our knowledge this is the first method to be provably robust with respect to topology of the domain and heterogeneities in the coefficients; this comes at a price of a coarse space which is large since it has to contain all gradients of scalar functions, plus a small contribution of local eigenvectors coming from a spectral GenEO coarse space. Nevertheless, the coarse space size is typically five times smaller than the original problem and the gradient part of the coarse matrix retains the sparsity pattern of the underlying finite element discretization. Further investigation is needed in order to provide an efficient way of solving the coarse problem.

Note that the preconditioner we propose can also be applied to time dependent Maxwell's equations, with the parameter $\gamma = (\Delta t)^{-1}$ in this case, where $\Delta t$ is the time discretisation step. Additionally, the positive Maxwell operator is used in \cite{Bachinger:2006:efficient} to build preconditioners in the context of time-periodic eddy current problems. Another class of related problems is the time-harmonic Maxwell system where the parameter $\gamma = -\omega^2$ can be negative and $\varepsilon$ complex valued. A different set of complications arise depending on whether we have high-frequency or low-frequency problems, as determined by the value of the angular frequency $\omega$. For this reason, the construction of fast iterative solvers for time-harmonic Maxwell systems is a problem of great current interest. Typically in the high-frequency case, the solution is very oscillatory and the size of the discretised problem increases with the frequency. The nature of the equations becomes very different from an elliptic problem and the method we propose is not suitable. The design of scalable coarse spaces that remain effective under strong indefiniteness is a very active field of research; see, e.g., \cite{ma2023wavenumber,bootland2023several} and references therein. Also, in this case, a domain decomposition preconditioner can be built from the absorptive case, relying on an idea originating in \cite{Erlangga:2004:OCP} but see also \cite{magolu2001incomplete}; this is often called ``shifted Laplacian preconditioning''. In \cite{Graham:2017:DDP}, the authors propose a two-level domain decomposition preconditioner for the Helmholtz equation based on adding absorption. These ideas are extended to Maxwell's equation in \cite{Bonazzoli:2019:DDP} through a combination of techniques inspired from the indefinite case but also from the positive definite one.

In the low-frequency regime, i.e., when the angular frequency $\omega \ll 1$, the problem is close to being singular and to deal with this problem, one option utilised in \cite{greif2007preconditioners,li2012parallel} is to consider a mixed form of these equations, which ensures stability and well-posedness, by introducing an artificial variable (Lagrange multiplier) $p$ to obtain the system
\begin{subequations}
	\label{eq:MaxwellExtended}
	\begin{align}
		\nabla \times (\mu^{-1} \nabla \times \boldsymbol{E} ) -{\omega^2} \varepsilon \boldsymbol{E} + \varepsilon \nabla p &= \boldsymbol{f} & &\text{in } \Omega,\\
		\nabla \cdot (\varepsilon \boldsymbol{E}) &= 0 & &\text{in } \Omega,\\
		\boldsymbol{E} \times \boldsymbol{n} &= \mathbf{0} & &\text{on } \partial\Omega,\\
		p &= 0 & &\text{on } \partial\Omega.
	\end{align}
\end{subequations}
If we discretise the system \eqref{eq:MaxwellExtended} using N\'ed\'elec elements for $\boldsymbol{E}$ and Lagrange nodal elements for $p$ we obtain the following saddle point system
\begin{align}
	\label{eq:MaxwellSaddlePoint}
	\mathcal{K} \left(\begin{array}{c} \mathbf{E} \\ \mathbf{p}\end{array}\right) \vcentcolon= \left(\begin{array}{cc}
		{K} - {\omega^2} M & B^T \\
		B & 0
	\end{array}\right) \left(\begin{array}{c} \mathbf{E} \\ \mathbf{p}\end{array}\right) = \left(\begin{array}{c}\mathbf{f} \\ \mathbf{0}\end{array}\right),
\end{align}
where $K \in \mathbb{R}^{n \times n}$ corresponds to the discretisation of $\nabla \times (\mu^{-1}\nabla\times)$ and has a large kernel of dimension $m$, $M \in \mathbb{R}^{n \times n}$ is the $\varepsilon$-weighted mass matrix computed on the edge-element space, and $B \in \mathbb{R}^{m \times n}$ is a full row rank matrix corresponding to the divergence constraint. As a consequence, the stiffness matrix ${\mathcal{K}}$ is symmetric but indefinite.

A block preconditioner is proposed in \cite{greif2007preconditioners}, taking the form
\begin{align}
	\label{eq:MaxwellBlockPrec}
	\mathcal{P}_{M,L} \vcentcolon= \left(\begin{array}{cc}
		\mathcal{P}_{M} & 0 \\
		0 & {L}
	\end{array}\right), \quad \mathcal{P}_{M} \vcentcolon= K + \gamma M, \quad \gamma = 1 -\omega^2 > 0,
\end{align}
where {$L$} is the nodal {$\varepsilon$-weighted Laplace operator}; the challenge now being solving efficiently with $\mathcal{P}_{M}$. Here $\mathcal{P}_{M}$ represents the discretisation of the \emph{positive Maxwell} problem \eqref{eq:positivemaxwell} and thus will be amenable to our methodology. We note that in the limit case $\omega = 0$, the leading block in \eqref{eq:MaxwellSaddlePoint} is maximally rank deficient and the approach of \cite{estrin2015nonsingular} applies. This further inspired related work that extends to the case of block preconditioners for $\omega$ small \cite{liang2021preconditioners}. We also mention earlier work on mixed approximation of magnetostatic problems in \cite{perugia1999linear}. More recent literature on solving general saddle point systems can be found in, e.g., \cite{pestana2015natural,notay2019convergence}.

Since a key part of many Maxwell-type solvers is the solution of a positive Maxwell problem \eqref{eq:positivemaxwell}, as in \eqref{eq:MaxwellBlockPrec} and, for example, \cite{cessenat1996mathematical,ciarlet1999fully}, in what follows we will focus on the efficient solution to the discretisation of this system, given by
\begin{align}
	\label{eq:discretemaxwell}
	A\mathbf{U} \vcentcolon= (K+\gamma M)\mathbf{U} = \mathbf{f},
\end{align}
where we now use $\mathbf{U}$ as our unknown. We reiterate that the matrix $K$ has a large kernel (since all gradients of $H^1$ functions are part of the kernel of the curl operator) and so designing efficient preconditioners for this problem can be challenging when $\gamma$ is small, especially in a general case with a non-trivial topology.

The remainder of this work is laid out as follows. In \Cref{sec:domain_decomposition_preliminaries} we introduce notation and preliminaries required for our theory. We then prove condition number bounds for our newly developed two-level additive Schwarz preconditioner in \Cref{sec:ASwithFSL}. A selection of illustrative numerical results are then presented in \Cref{sec:numerical_results}. These detail examples with heterogeneity and domains with holes and show the robustness of our approach in all cases, even for elongated domains with holes for which the iteration counts of AMS increase with the difficulty of the test case. Finally, we summarise our conclusions in \Cref{sec:conclusions}.

\section{Notation and preliminaries}
\label{sec:domain_decomposition_preliminaries}

In the work that follows we will use the idea of subspace decomposition from the auxiliary space preconditioner to build a new domain decomposition preconditioner. The latter will contain a second level with two types of components: one arising from the part of the kernel of matrix $K$ that is readily available (i.e., related to the gradient of $H^1$ functions in our case), and the other stemming from local spectral information, which will help tackle the heterogeneous nature of the problem and the potential for holes in the domain.

Both cases can be treated from a theoretical point of view in the general framework of the fictitious space lemma (FSL) of Nepomnyaschikh \cite{Nepom:1991:DDM}, which can be considered the Lax--Milgram theorem of domain decomposition. For the sake of completeness, we briefly introduce the problem formulation and notation used (which is similar to that in \cite{nataf2020mathematical}), along with some theoretical results that will be useful in \Cref{sec:ASwithFSL}. We then detail the FSL, presented in the form originating in Griebel and Oswald \cite{griebel1995abstract}, before applying it within our context.

We suppose that our problem is posed on an arbitrary finite domain $\Omega \subset \mathbb{R}^3$ and is given in weak form. Namely, for an appropriate Hilbert space $H$, of functions on $\Omega$, we wish to find $u \in H$ such that
\begin{align*}
	a(u,v) = l(v), \qquad \text{for all } v \in H,
\end{align*}
where $a(\cdot,\cdot)$ is a symmetric coercive bilinear form on $H$ and $l(\cdot)$ is a linear functional on $H$. In our case of the positive Maxwell problem \eqref{eq:positivemaxwell}, highlighting the functions as vectorial by using bold, our bilinear form is
\begin{align}
	\label{eq:bilinearformposmax}
	a(\mathbf{u},\mathbf{v}) = \int_{\Omega} \mu^{-1} (\nabla \times \mathbf{u}) \cdot (\nabla \times \mathbf{v}) + \gamma \varepsilon \mathbf{u} \cdot \mathbf{v} \, \mathrm{d}\mathbf{x},
\end{align}
while the linear functional is
\begin{align*}
	l(\mathbf{v}) = \int_{\Omega} \mathbf{f} \cdot \mathbf{v} \, \mathrm{d}\mathbf{x}.
\end{align*}
We assume the problem is discretised on a mesh $\mathcal{T}_{h}$ by an appropriate finite element method, say using N\'ed\'elec elements, to obtain the linear system \eqref{eq:b}, with symmetric positive definite system matrix $A$.

Consider an overlapping domain decomposition $\Omega = \cup_{1 \le i \le N}\Omega_i$ where the domain $\Omega$ is decomposed into $N$ subdomains, which we assume to conform to the mesh $\mathcal{T}_{h}$. We will later need the use of local Neumann matrices on each subdomain $\Omega_i$. These are given by restricting the bilinear form \eqref{eq:bilinearformposmax} to be only over $\Omega_i$ without enforcing any new boundary conditions on the interior of $\Omega$, that is, we define $A_{i}^\mathrm{Neu}$ to be the equivalent discretisation of
\begin{align*}
	a_{\Omega_i}(\mathbf{u},\mathbf{v}) = \int_{\Omega_i} \mu^{-1} (\nabla \times \mathbf{u}) \cdot (\nabla \times \mathbf{v}) + \gamma \varepsilon \mathbf{u} \cdot \mathbf{v} \, \mathrm{d}\mathbf{x},
\end{align*}
where the boundary conditions from the underlying problem, i.e., from \eqref{eq:positivemaxwell}, are imposed on $\partial\Omega_i \cap \partial\Omega$. Note that, since $\gamma > 0$, the Neumann matrices are SPD; this will ensure a necessary eigenproblem (to be defined in \cref{gevp:tauthresholdAS}) is always well-posed.

Let the degrees of freedom (dofs) for the discretised problem \eqref{eq:discretemaxwell} be ordered into a set $\mathcal{N}$ of size $n$. For each subdomain $\Omega_i$ we suppose the local dofs contained within the subdomain are given by $\mathcal{N}_{i} \subset \mathcal{N}$, with size $n_i$. We define the restriction operators from the set $\mathcal{N}$ to $\mathcal{N}_{i}$, respectively for each $i$, as $R_{i} \colon \mathbb{R}^{n} \rightarrow \mathbb{R}^{n_i}$. The corresponding matrices $R_{i}^{T}$, which constitute extension by zero outside of $\mathcal{N}_{i}$, are called prolongation (or extension) operators. We also require a partition of unity to glue together solutions on each subdomain $\Omega_i$. At the algebraic level this is given by non-negative diagonal matrices $D_{i} \in \mathbb{R}^{n_i \times n_i}$ such that
\begin{align}
	\label{eq:PoU}
	\sum_{i=1}^{N} R_{i}^{T} D_{i} R_{i} = I,
\end{align}
where $I \in \mathbb{R}^{n \times n}$ is the identity. We further introduce several constants related to the domain decomposition which will be useful later on. Firstly, we define the maximum multiplicity of the interaction between subdomains plus one as
\begin{align}
	\label{eq:k0}
	k_{0} \vcentcolon= \max_{1 \le i \le N} \# \left\lbrace j \, \vert \, R_{j} A R_{i}^{T} \neq 0 \right\rbrace,
\end{align}
where $\#$ gives the size of a set. Note that this is bounded from below by the number of colours required to colour a decomposition so that no two neighbouring subdomains share the same colour and is typically a small constant for standard decomposition techniques. Secondly, we define the maximum multiplicity of subdomain intersection, $k_{1}$, as the largest number of subdomains whose intersection has non-zero measure. We now present two auxiliary lemmas from \cite{Dolean:2015:IDDSiam}, related to these constants, that will be used later in our theory in \Cref{sec:ASwithFSL}.

\begin{lemma}[Lemma~7.9 in \cite{Dolean:2015:IDDSiam}]\label{Lemma_k0}
	Suppose that we have a set of rectangular matrices $Q_i \in \mathbb{R}^{n \times n_i}$, for $1 \le i \le N$, and that $A \in \mathbb{R}^{n \times n}$ is symmetric positive definite. Define the constant
	\begin{align*}
		\widetilde{k_{0}} \vcentcolon= \max_{1 \le i \le N} \# \left\lbrace j \, \vert \, Q_{j} A Q_{i}^{T} \neq 0 \right\rbrace.
	\end{align*}
	Consider the vectors $\mathbf{U}_i \in \mathbb{R}^{n_i}$ for $1 \le i \le N$, then we have the following estimate
	\begin{align*}
		\left(\sum_{i=1}^{N} Q_i \mathbf{U}_i \right)^{T} A \left(\sum_{i=1}^{N} Q_i \mathbf{U}_i \right) \le \widetilde{k_{0}} \sum_{i=1}^{N} \mathbf{U}_{i}^{T} (Q_{i}^{T} A Q_{i}) \mathbf{U}_{i}.
	\end{align*}
\end{lemma}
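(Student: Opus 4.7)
The plan is to expand the $A$-quadratic form on the left as a double sum of cross terms $\mathbf{U}_i^T Q_i^T A Q_j \mathbf{U}_j$, exploit the interaction bound encoded by $\widetilde{k_0}$ to restrict attention to a sparse set of index pairs, and then reduce each surviving cross term to diagonal ones by combining Cauchy--Schwarz in the $A$-induced inner product with a Young-type inequality.

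More precisely, the first step is to write
\begin{equation*}
\left(\sum_{i=1}^N Q_i \mathbf{U}_i\right)^T A \left(\sum_{j=1}^N Q_j \mathbf{U}_j\right) = \sum_{i,j=1}^N \mathbf{U}_i^T (Q_i^T A Q_j) \mathbf{U}_j
\end{equation*}
and introduce the interaction pattern $\mathcal{I} := \{(i,j) : Q_i^T A Q_j \neq 0\}$. Since $A$ is symmetric, this relation is symmetric in $i$ and $j$, and by the definition of $\widetilde{k_0}$ each row and each column of $\mathcal{I}$ contains at most $\widetilde{k_0}$ entries. Only indices in $\mathcal{I}$ contribute to the sum above.

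The second step uses that $A$ is symmetric positive definite, so $\langle x,y\rangle_A := x^T A y$ defines an inner product on $\mathbb{R}^n$. Applying the Cauchy--Schwarz inequality to the vectors $Q_i \mathbf{U}_i$ and $Q_j \mathbf{U}_j$ and then Young's inequality $ab \le \tfrac{1}{2}(a^2+b^2)$ yields, for every $(i,j)$,
\begin{equation*}
\bigl|\mathbf{U}_i^T Q_i^T A Q_j \mathbf{U}_j\bigr| \le \tfrac{1}{2}\bigl(\mathbf{U}_i^T Q_i^T A Q_i \mathbf{U}_i + \mathbf{U}_j^T Q_j^T A Q_j \mathbf{U}_j\bigr).
\end{equation*}

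The final step is a bookkeeping one: summing this bound over $(i,j)\in\mathcal{I}$, the diagonal term $\mathbf{U}_i^T Q_i^T A Q_i \mathbf{U}_i$ is charged once for each neighbour in its row and once for each neighbour in its column of $\mathcal{I}$, i.e.\ at most $2\widetilde{k_0}$ times in total, and the factor $\tfrac{1}{2}$ precisely cancels this overcount. Majorising the (signed) double sum by the sum of absolute values on $\mathcal{I}$ then gives
\begin{equation*}
\left(\sum_{i=1}^N Q_i \mathbf{U}_i\right)^T A \left(\sum_{i=1}^N Q_i \mathbf{U}_i\right) \le \widetilde{k_0}\sum_{i=1}^N \mathbf{U}_i^T (Q_i^T A Q_i) \mathbf{U}_i,
\end{equation*}
which is the claim. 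There is no real obstacle here; the only care needed is the symmetric double-counting of neighbours when passing from the pairwise bound to the final estimate.
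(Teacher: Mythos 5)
Your proof is correct and follows the standard argument for this lemma (which the paper itself only cites from the reference): expand the $A$-quadratic form as a double sum, discard non-interacting pairs, bound each cross term by Cauchy--Schwarz in the $A$-inner product plus Young's inequality, and count each diagonal term at most $2\widetilde{k_0}$ times using the symmetry of the interaction pattern. You also sensibly read the paper's condition $Q_jAQ_i^T\neq 0$ as $Q_j^TAQ_i\neq 0$, which is the dimensionally consistent interpretation given $Q_i\in\R^{n\times n_i}$.
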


\begin{lemma}[Lemma~7.13 in~\cite{Dolean:2015:IDDSiam}]\label{Lemma_k1}
	Suppose that the matrix $A \in \mathbb{R}^{n \times n}$ stems from the symmetric coercive bilinear form $a(\cdot,\cdot)$ on a domain $\Omega$ discretised on a mesh $\mathcal{T}_{h}$. Further, assume a domain decomposition as above, which yield equivalent local Neumann matrices $A_{i}^\mathrm{Neu}$, $1 \le i \le N$, and let $k_{1}$ be defined, as previously, as the maximum multiplicity of subdomain intersection. Consider a vector $\mathbf{U} \in \mathbb{R}^{n}$, then we have the bound
	\begin{align*}
		\sum_{i=1}^{N} (R_{i} \mathbf{U})^{T} A_{i}^\mathrm{Neu} (R_{i} \mathbf{U}) \le k_{1} \mathbf{U}^{T}A\mathbf{U}.
	\end{align*}
\end{lemma}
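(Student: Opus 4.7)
The plan is to exploit the fact that the bilinear form $a(\cdot,\cdot)$ in \eqref{eq:bilinearformposmax} is defined as an integral over $\Omega$, hence is additive over any partition of $\Omega$ by mesh elements of $\mathcal{T}_h$, and that the integrand is pointwise non-negative (since $\mu^{-1}|\nabla\times\mathbf{u}|^2 + \gamma\varepsilon|\mathbf{u}|^2 \ge 0$). First I would identify each algebraic quantity with its continuous counterpart: if $u$ denotes the finite element function with coefficient vector $\mathbf{U}$, then $\mathbf{U}^T A \mathbf{U} = a(u,u)$ and $(R_i\mathbf{U})^T A_i^{\mathrm{Neu}}(R_i\mathbf{U}) = a_{\Omega_i}(u,u)$ by the very definition of $A_i^{\mathrm{Neu}}$ and because $R_i$ picks out the dofs whose basis functions have support meeting $\Omega_i$.

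Next, for each subdomain I would decompose the restricted form element by element:
\begin{align*}
	a_{\Omega_i}(u,u) = \sum_{T \in \mathcal{T}_h,\; T \subset \Omega_i} a_T(u,u),
\end{align*}
where $a_T$ denotes the element contribution, and likewise $a(u,u) = \sum_{T \in \mathcal{T}_h} a_T(u,u)$ (both sums being meaningful because the decomposition $\Omega = \cup_i \Omega_i$ is conforming with the mesh). Summing over $i$ and swapping the order of summation yields
\begin{align*}
	\sum_{i=1}^N a_{\Omega_i}(u,u) = \sum_{T \in \mathcal{T}_h} \#\{\, i : T \subset \Omega_i \,\}\, a_T(u,u).
\end{align*}
By the definition of $k_1$ as the maximum multiplicity of subdomain intersection, the counting factor is bounded by $k_1$ for every $T$. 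Since each element contribution $a_T(u,u)$ is non-negative, I can pull the bound out of the sum to obtain $\sum_{i} a_{\Omega_i}(u,u) \le k_1 \sum_T a_T(u,u) = k_1 a(u,u)$, which is the desired inequality.

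The main obstacle, and the only spot where care is needed, is the non-negativity of each element contribution $a_T(u,u)$: without it, the bound on the multiplicity would not transfer to a bound on the sum. For the positive Maxwell bilinear form this is immediate from pointwise non-negativity of the integrand, and more generally it is a property shared by any bilinear form whose density is a non-negative quadratic in $u$ and its derivatives. A minor side issue is identifying $(R_i\mathbf{U})^T A_i^{\mathrm{Neu}}(R_i\mathbf{U})$ with $a_{\Omega_i}(u,u)$, which requires that the local Neumann matrix be assembled consistently from element matrices of cells inside $\Omega_i$ and that $R_i$ be compatible with the finite element basis; both are true by construction of the domain decomposition from the mesh.
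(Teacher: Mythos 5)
Your proof is correct and is essentially the standard argument behind Lemma~7.13 of the cited reference (the paper itself only cites it without reproducing the proof): element-wise additivity of the energy, non-negativity of each element contribution, and the multiplicity count bounded by $k_1$. No gaps.
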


Our theory will build on the additive Schwarz domain decomposition approach. When used as a preconditioner it takes the form
\begin{align}
	\label{eq:AS}
	M_\mathrm{AS}^{-1} = \sum_{i=1}^{N} R_{i}^{T} A_{i}^{-1} R_{i},
\end{align}
where $A_{i} = R_{i} A R_{i}^{T}$ is the (local) Dirichlet matrix on $\Omega_{i}$, namely where Dirichlet conditions are implicitly assumed one extra layer of mesh elements away from $\partial\Omega_{i}\setminus\partial\Omega$. Since such a one-level approach is typically not scalable (though see \cite{bootland2022analysis}) we must enhance convergence through use of a second-level correction, i.e., a coarse space. This derives from a collection of column vectors $Z$, which should form a basis for a space $V_{0}$, called the coarse space. This further yields a coarse space operator $E = Z^{T} A Z$ along with the coarse correction operator $Z E^{-1} Z^{T}$. The second level can be incorporated in a number of ways; we consider one that maintains the symmetry of \eqref{eq:AS} given by
\begin{align}
	\label{eq:AS2}
	M_\mathrm{AS,2}^{-1} = Z E^{-1} Z^{T} + (I - P_{0}) M_\mathrm{AS}^{-1} (I - P_{0}^{T}),
\end{align}
where $P_{0} = Z E^{-1} Z^{T} A$ is the $A$-orthogonal projection on the coarse space $V_0$. Note that vectors in the coarse space are typically denoted with a subscript zero, which relates to the coarse space as opposed to a subdomain $\Omega_i$, $i \ge 1$. We suppose that the size of the coarse space is $n_{0}$ and a generic vector in $V_{0}$ is written as $\mathbf{U}_{0} \in \mathbb{R}^{n_0}$ in a similar way that $\mathbf{U}_{i} \in \mathbb{R}^{n_i}$ refers to a local vector on the subdomain $\Omega_i$, $1 \le i \le N$.

We now turn to the main theoretical result that we employ, the fictitious space lemma (FSL), before detailing how this fits within our current context.

\begin{lemma}[Fictitious space lemma, as presented by Griebel and Oswald \cite{griebel1995abstract}]\label{FSL}
	Consider two Hilbert spaces $H$ and $H_D$ with the respective inner products $(\cdot,\cdot)$ and $(\cdot,\cdot)_{D}$. Further, suppose that we have symmetric coercive bilinear forms $a \colon H \times H \rightarrow \R$ and $b \colon H_D \times H_D \rightarrow \R$. The operators $A$ and $B$ are defined as follows:
	\begin{itemize}
		\item Let $A \colon H \rightarrow H$ be such that $(Au,v)=a(u,v)$ for all $u,v\in H$;
		\item Let $B \colon H_D \rightarrow H_D$ be such that $(Bu_D,v_D)_D=b(u_D,v_D)$ for all $u_D,v_D\in H_D$.
	\end{itemize}
	Suppose that we have a linear surjective operator $\RL \colon H_D \rightarrow H$ verifying the following properties.
	\begin{itemize}
		\item Continuity: $\exists c_R > 0$ such that $\forall u_D\in H_D$ we have
		\begin{align*}
			a(\RL u_D,\RL u_D) \le c_R b(u_D,u_D).
		\end{align*}
		\item Stable decomposition: $\exists c_T>0$ such that $\forall u\in H$ $\exists u_D\in H_D$ with $\RL u_D=u$ and
		\begin{align*}
			c_T b(u_D,u_D) \le a(\RL u_D,\RL u_D)=a(u,u).
		\end{align*}
	\end{itemize}
	Consider the adjoint operator $\RL^* \colon H\rightarrow H_D$ given by
	$(\RL u_D, u) = (u_D, \RL^* u)_D$ for all $u_D\in H_D$ and
	$u\in H$. Then for all $u\in H$ we have the spectral estimate
	\begin{align*}
		c_T a(u,u) \le a\left(\RL B^{-1} \RL^* A u,\,u\right) \le c_R a(u,u).
	\end{align*}
	Thus, the eigenvalues of the preconditioned operator $\RL B^{-1} \RL^* A$ are bounded from below by $c_T$ and from above by $c_R$ and we have the spectral condition number estimate
	\begin{align*}
		\kappa_{2}({\RL B^{-1} \RL^* A}) \le {c_T^{-1}} {c_R}.
	\end{align*}
\end{lemma}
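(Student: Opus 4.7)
The plan is to recast the preconditioned operator $T := \RL B^{-1} \RL^* A$ variationally, then extract the two sides of the spectral estimate from, respectively, the continuity and stable decomposition hypotheses via two applications of Cauchy--Schwarz. First I would observe that, for any $u \in \HO$, the element $w_D := B^{-1} \RL^* A u \in \HP$ is characterised by the variational problem
\begin{align*}
b(w_D, v_D) = (B w_D, v_D)_D = (\RL^* A u, v_D)_D = (A u, \RL v_D) = a(u, \RL v_D) \qquad \forall v_D \in \HP,
\end{align*}
so that $T u = \RL w_D$ and, taking $v_D = w_D$, one obtains the key identity $a(T u, u) = a(\RL w_D, u) = b(w_D, w_D)$. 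Since $T$ is similar to the self-adjoint positive operator $A^{1/2} \RL B^{-1} \RL^* A^{1/2}$, its eigenvalues are real and positive, and can be characterised via Rayleigh quotients: $\lambda_{\max}(T) = \sup_u a(Tu,u)/a(u,u)$ and $\lambda_{\min}(T)$ analogously with $\inf$. It thus suffices to bound $a(Tu,u)/a(u,u)$ from above by $c_R$ and from below by $c_T$.

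For the upper bound, I would apply Cauchy--Schwarz in $a$ to the identity $a(T u, u) = a(u, \RL w_D)$ and then invoke the continuity hypothesis:
\begin{align*}
a(T u, u) = a(u, \RL w_D) \le a(u,u)^{1/2} \, a(\RL w_D, \RL w_D)^{1/2} \le a(u,u)^{1/2} \bigl(c_R \, b(w_D, w_D)\bigr)^{1/2},
\end{align*}
and recalling $b(w_D, w_D) = a(T u, u)$ yields $a(T u, u) \le c_R \, a(u,u)$ after squaring. For the lower bound I would use the stable decomposition hypothesis to pick some $u_D \in \HP$ with $\RL u_D = u$ and $b(u_D, u_D) \le c_T^{-1} a(u, u)$. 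Using $\RL u_D = u$ and the variational characterisation of $w_D$ tested with $v_D = u_D$,
\begin{align*}
a(u,u) = a(u, \RL u_D) = b(w_D, u_D) \le b(w_D, w_D)^{1/2} \, b(u_D, u_D)^{1/2} \le a(T u, u)^{1/2} \bigl(c_T^{-1} a(u,u)\bigr)^{1/2},
\end{align*}
and squaring rearranges to $c_T \, a(u,u) \le a(T u, u)$. The spectral condition number estimate then follows immediately from $\kappa_2(T) = \lambda_{\max}(T)/\lambda_{\min}(T) \le c_R/c_T$.

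The main subtlety is really the correct interpretation of the operator $T$ on non-symmetric spaces; once the variational identity $a(T u, u) = b(w_D, w_D)$ is in place, the two Cauchy--Schwarz arguments are essentially forced and the rest is bookkeeping. The surjectivity of $\RL$ is used only implicitly, to guarantee the existence of $u_D$ in the stable decomposition step, so no separate argument is required for it. No explicit use of the Hilbert space structure beyond the inner products $(\cdot,\cdot)$, $(\cdot,\cdot)_D$ and the symmetry/coercivity of $a, b$ is needed, so the same proof template will apply verbatim to the matrix setting used in \Cref{sec:ASwithFSL}.
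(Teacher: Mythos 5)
The paper does not prove this lemma --- it is quoted verbatim from Griebel and Oswald \cite{griebel1995abstract} and used as a black box --- so there is no in-paper argument to compare against. Your proof is correct and is essentially the standard one: the variational identity $a(Tu,u)=b(w_D,w_D)$ for $w_D=B^{-1}\RL^*Au$, Cauchy--Schwarz in $a$ plus continuity for the upper bound, Cauchy--Schwarz in $b$ plus the stable decomposition for the lower bound, and the $a$-self-adjointness of $T$ to convert the Rayleigh-quotient bounds into eigenvalue bounds; all steps check out, including your observation that surjectivity is already subsumed in the stable decomposition hypothesis.
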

\begin{remark}
	In this lemma we have two Hilbert spaces (that are linked by the surjective operator $\RL$) and two symmetric coercive bilinear forms. The first one comes from our problem while the second is for the preconditioner. Under the assumptions of continuity and stable decomposition, the spectral estimate tells us that the spectral condition number of the preconditioned problem is bounded solely in terms of the constants $c_R$ and $c_T$. The matrix $A \in \R^{n \times n}$ is typically a symmetric positive definite matrix, stemming from the symmetric coercive bilinear form $a(\cdot,\cdot)$ on $\Omega$ corresponding to the problem to solve. Our main assumption is that the matrix $A$, defining the linear system we wish to solve, has a large near-kernel which can cause problems numerically. In other words, $A$ is a small perturbation from $K$, a matrix with a large kernel. Thus, we would like to deal with the near-kernel directly in the coarse space in order that we can primarily work in the orthogonal complement, avoiding the ill-conditioning in $A$ from the near-kernel. The near-kernel in the case of the positive Maxwell problem \eqref{eq:positivemaxwell} on a trivial topology domain $\Omega$ is given by the gradient of all $H^1(\Omega)$ functions, namely the kernel of the curl--curl operator.
\end{remark}

\begin{definition}[Split near-kernel coarse space]\label{def:near-kernelCS}
	Let $G$ be the easily computable part of the near-kernel of $A$; namely, in our case, vectors stemming from gradients of $H^1$ functions but not those related to any holes in the domain. We define the split near-kernel (SNK) coarse space $V_G \subset \R^{n}$ as the vector space spanned by the sequence $(R_i^T D_i G_i)_{1\le i \le N}$, where $G_i := R_i G$, so that $G \mathop \subset V_G$. Here we have split $G$ into contributions on subdomains and note that $\dim V_{G} > \dim G$ in general; we will see that such a splitting is necessary in numerical results. The coarse space matrix $Z \in \R^{n \times n_{0}}$ is a rectangular matrix whose columns are a basis of $V_0:=V_G$. The coarse space matrix is then defined in the usual way by $E = Z^TAZ$.
\end{definition}

Note that the space $V_G$ is the equivalent of the Nicolaides coarse space \cite{nicolaides1987deflation} for the Laplace problem or the rigid body modes coarse space for the linear elasticity problem; see \cite{Dolean:2015:IDDSiam}.

We now need to define all the other ingredients in the FSL (\Cref{FSL}). Our (discrete) problem space is $H = \mathbb{R}^{n}$ while the second Hilbert space $H_D$, for the preconditioning, is the product space of vectors stemming from the $n_{i}$ degrees of freedom on the local subdomains $\Omega_i$ along with the $n_0$ coarse space vectors
\begin{align}
	\label{eq:H_D}
	H_D \vcentcolon= \R^{n_0} \times \prod_{i=1}^N \R^{n_i}.
\end{align}
The bilinear form $b(\cdot,\cdot)$ is chosen to be given by the sum of local bilinear forms $b_i(\mathbf{U}_i, \mathbf{V}_i) \vcentcolon= (A_{i} \mathbf{U}_{i}, \mathbf{V}_{i})$ and a coarse space contribution
\begin{align}
	\label{eq:b}
	b(\mathcal{U},\mathcal{V}) &\vcentcolon= (E\mathbf{U}_0,\mathbf{V}_0) + \sum_{i=1}^{N} b_i(\mathbf{U}_i, \mathbf{V}_i), & \text{for all } \mathcal{U}, \mathcal{V} &\in H_D,
\end{align}
where the product space vectors are $\mathcal{U} = (\mathbf{U}_0, (\mathbf{U}_i)_{1 \le i \le N})$ and $\mathcal{V} = (\mathbf{V}_0, (\mathbf{V}_i)_{1 \le i \le N})$. Finally, the surjective operator $\RL_\mathrm{AS} \colon H_D \rightarrow H$ corresponding the additive Schwarz method is given by
\begin{align}
	\label{eq:RAS}
	\RL_\mathrm{AS}(\mathcal{U}) \vcentcolon= Z\mathbf{U}_0 + (I-P_0) \sum_{i=1}^{N} R_i^T \mathbf{U}_i,
\end{align}
where $P_{0} = Z E^{-1} Z^{T} A$ is the $A$-orthogonal projection on the coarse space $V_0$.

\begin{definition}[Orthogonal projections and orthogonal complements]\label{def:OrthPandCs}
	Let the matrices $B_i \in \R^{n_i \times n_i}$ correspond to the local symmetric coercive bilinear forms $b_i(\cdot,\cdot)$, for $i \ge 1$. Within $\R^{n_i}$ we define the $b_i$-orthogonal complement of $G_i$
	\begin{align}
		\label{eq:Giortho}
		\Gw \vcentcolon= \left\{ \mathbf{U}_i \in \R^{n_i} \ \vert \ \forall \, \mathbf{V}_i \in G_i, \ b_i(\mathbf{U}_i, \mathbf{V}_i ) = 0 \right\}.
	\end{align}
	As such, let $\bw$ denote the restriction of $b_i$ to $\Gw$. 
\end{definition}
We have a representation formula $\bw$:
\begin{lemma}
	The Riesz representation theorem gives the existence of a unique isomorphism $\Bw \colon \Gw \rightarrow \Gw$ into itself so that
	\begin{align*}
		(\Bw\,\mathbf{U}_i,\mathbf{V}_i) \vcentcolon= \bw(\mathbf{U}_i,\mathbf{V}_i) ,\,\forall \, \mathbf{U}_i, \mathbf{V}_i \in \Gw.
	\end{align*}
	Further, we let $\xi_{0i}$ be the $b_i$-orthogonal projection from $\R^{n_i}$ on $G_i$ parallel to $\Gw$. This also enables us to express the inverse of $\Bw$, which we will denote by $B_i^\dag$, through the following formula
	\begin{align}
		\label{eq:inverseBw}
		B_i^\dag = (I - \xi_{0i}) B_i^{-1}.
	\end{align}
\end{lemma}
\begin{proof}
In order to check formula \eqref{eq:inverseBw}, we have to show that
\begin{align}
	\label{eq:inverseBwShow}
	\Bw(I - \xi_{0i})B_i^{-1} y= y,\, \forall y \in \Gw.
\end{align}
Let $z \in \Gw$, using the fact that $I-\xi_{0i}$ is the $b_i$-orthogonal projection on $\Gw$, we have
\begin{align*}
	(\Bw(I - \xi_{0i}) B_i^{-1} y, z) = b_i((I - \xi_{0i}) B_i^{-1} y, z) = b_i(B_i^{-1} y, z) = (y,z).
\end{align*}
Since this equality holds for any $z \in \Gw$, this proves that \eqref{eq:inverseBwShow} holds and thus that $B_i^\dag$ in \eqref{eq:inverseBw} is the inverse of $\Bw$.
\end{proof}

\section{Additive Schwarz method using the FSL}
\label{sec:ASwithFSL}

In this section we will, in the first instance, apply the FSL to the two-level additive Schwarz algorithm which uses the split near-kernel (SNK) coarse space from \Cref{def:near-kernelCS}, the setup of which was detailed in the previous section. We then improve the condition number bound obtained to give a robust approach by enriching the underlying coarse space with appropriate generalised eigenfunctions from the orthogonal complements $\Gw$.

\subsection{A two-level additive Schwarz method with the split near-kernel coarse space}

In order to apply the FSL (\Cref{FSL}) we first need to check its assumptions. These results are obtained in the following two lemmas.

\begin{lemma}[Surjectivity and continuity of $\RL_\mathrm{AS}$]\label{lem:surj_cont}
	The operator $\RL_\mathrm{AS}$ defined in \eqref{eq:RAS} is surjective and continuous with continuity constant $c_R = k_0$, as defined in \eqref{eq:k0}.
\end{lemma}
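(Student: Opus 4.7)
The plan is to verify both claims directly from the definition of $\mathcal{R}_\mathrm{AS}$, using the partition of unity property \eqref{eq:PoU}, the $A$-orthogonality of $P_0$, and Lemma~\ref{Lemma_k0} for the continuity estimate.

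For surjectivity, I would start with an arbitrary $\mathbf{U} \in H = \mathbb{R}^n$ and split it using $P_0$ into $\mathbf{U} = P_0 \mathbf{U} + (I - P_0)\mathbf{U}$. Since $P_0 \mathbf{U} \in V_0 = \operatorname{range}(Z)$, we can write $P_0\mathbf{U} = Z \mathbf{U}_0$ for some $\mathbf{U}_0 \in \mathbb{R}^{n_0}$ (explicitly, $\mathbf{U}_0 = E^{-1} Z^T A \mathbf{U}$). For the complementary part, I apply the partition of unity \eqref{eq:PoU} to write
\begin{equation*}
(I - P_0)\mathbf{U} = (I - P_0) \sum_{i=1}^{N} R_i^T D_i R_i \mathbf{U},
\end{equation*}
which suggests setting $\mathbf{U}_i := D_i R_i \mathbf{U}$ for $1 \le i \le N$. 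Then $\mathcal{R}_\mathrm{AS}(\mathbf{U}_0, (\mathbf{U}_i)_i) = \mathbf{U}$, proving surjectivity.

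For continuity, the key observation is that the two summands defining $\mathcal{R}_\mathrm{AS}$ are $A$-orthogonal: $Z\mathbf{U}_0 \in V_0$ while $(I - P_0)\sum_i R_i^T \mathbf{U}_i$ lies in the $A$-orthogonal complement of $V_0$. Hence
\begin{equation*}
a(\mathcal{R}_\mathrm{AS}\mathcal{U}, \mathcal{R}_\mathrm{AS}\mathcal{U}) = a(Z\mathbf{U}_0, Z\mathbf{U}_0) + a\Bigl((I-P_0)\textstyle\sum_i R_i^T \mathbf{U}_i,\,(I-P_0)\sum_i R_i^T \mathbf{U}_i\Bigr).
\end{equation*}
The coarse term equals $(E\mathbf{U}_0, \mathbf{U}_0)$ by definition of $E = Z^T A Z$. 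For the fine term I use that $I - P_0$ is an $A$-orthogonal projection, hence a contraction in the $a$-norm, giving
\begin{equation*}
a\Bigl((I-P_0)\textstyle\sum_i R_i^T \mathbf{U}_i,\,(I-P_0)\sum_i R_i^T \mathbf{U}_i\Bigr) \le a\Bigl(\sum_i R_i^T \mathbf{U}_i,\,\sum_i R_i^T \mathbf{U}_i\Bigr).
\end{equation*}

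Finally I apply Lemma~\ref{Lemma_k0} with $Q_i = R_i^T$ (noting $\widetilde{k_0} = k_0$ in this case by \eqref{eq:k0}) to bound the right-hand side by $k_0 \sum_i \mathbf{U}_i^T A_i \mathbf{U}_i = k_0 \sum_i b_i(\mathbf{U}_i, \mathbf{U}_i)$. Combining and using $k_0 \ge 1$ yields
\begin{equation*}
a(\mathcal{R}_\mathrm{AS}\mathcal{U}, \mathcal{R}_\mathrm{AS}\mathcal{U}) \le (E\mathbf{U}_0, \mathbf{U}_0) + k_0 \sum_{i=1}^{N} b_i(\mathbf{U}_i, \mathbf{U}_i) \le k_0\, b(\mathcal{U}, \mathcal{U}),
\end{equation*}
which is exactly the continuity bound with $c_R = k_0$. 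No real obstacle is anticipated; the only subtle point is noticing that the $A$-orthogonality of $P_0$ both decouples the coarse and fine contributions and absorbs the projection $(I-P_0)$ harmlessly inside the $a$-norm, so that the standard colouring argument of Lemma~\ref{Lemma_k0} applies directly.
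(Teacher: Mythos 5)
Your proposal is correct and follows essentially the same route as the paper: surjectivity via the partition of unity with $\mathbf{U}_i = D_i R_i\mathbf{U}$ and $Z\mathbf{U}_0 = P_0\mathbf{U}$, and continuity via the $A$-orthogonal splitting of the coarse and fine parts, the contraction property of $I-P_0$ in the $A$-norm, \Cref{Lemma_k0} with $Q_i = R_i^T$, and finally $k_0 \ge 1$ to absorb the coarse term. No gaps.
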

\begin{proof}
	First, we note that for any $\mathbf{U} \in \HO$, due to the partition of unity property \eqref{eq:PoU}, we have
	\begin{align*}
		\mathbf{U} &= P_0 \mathbf{U} + (I - P_0) \mathbf{U} = P_0 \mathbf{U} + (I - P_0) \sum_{i=1}^N R_i^T D_i R_i \mathbf{U}.
	\end{align*}
	Now since $P_0 \mathbf{U} \in V_G$ there exists $\mathbf{U}_0$ such that $Z \mathbf{U}_0 = P_0 \mathbf{U}$. Therefore, we have
	\begin{align*}
		\mathbf{U} = \RL_\mathrm{AS}(\mathbf{U}_0, (D_i R_i \mathbf{U})_{1\leq i\leq N}),
	\end{align*}
	and hence $\RL_\mathrm{AS}$ is surjective.
	
	To prove continuity, we are required to estimate a constant $c_R$ such that for all $\mathcal{U} = (\mathbf{U}_0, (\mathbf{U}_i)_{1 \le i \le N}) \in \HP$ we have
	\begin{align*}
		a( \RL_\mathrm{AS} (\mathcal{U}), \RL_\mathrm{AS} (\mathcal{U})) &\le c_R \, b(\mathcal{U}, \mathcal{U}) = c_R \, \left[ (E\mathbf{U}_0, \mathbf{U}_0) + \sum_{i=1}^N (R_i A R_i^T \mathbf{U}_i, \mathbf{U}_i)\right].
	\end{align*}
	In this regard, using twice the $A$-orthogonality of $I-P_0$ along with \Cref{Lemma_k0}, we have the estimate
	\begin{align*}
		a(\RL_\mathrm{AS}(\mathcal{U}), & \RL_\mathrm{AS}(\mathcal{U}))\\ &= \left\| Z \mathbf{U}_0 + (I-P_0) \sum_{i=1}^N R_i^T \mathbf{U}_i \right\|_A^2 = \left\| Z \mathbf{U}_0 \right\|_A^2 + \left\| (I-P_0) \sum_{i=1}^N R_i^T \mathbf{U}_i \right\|_A^2\\
		&\le (E \mathbf{U}_0, \mathbf{U}_0) + \NC \sum_{i=1}^N \left\| R_i^T \mathbf{U}_i \right\|_A^2 
		\le \NC \left[ (E \mathbf{U}_0, \mathbf{U}_0) + \sum_{i=1}^N \left\| R_i^T \mathbf{U}_i \right\|_A^2 \right].
	\end{align*}
	Thus the estimate of the constant of continuity of $\RL_\mathrm{AS}$ can be chosen as $c_R \vcentcolon= \NC \ge 1$, as defined in \eqref{eq:k0}.
\end{proof}

\begin{lemma}[Stable decomposition with $\RL_\mathrm{AS}$]\label{lem:stable}
	The operator $\RL_\mathrm{AS}$ verifies the stable decomposition property, i.e., $\exists c_T>0$ such that $\forall \mathbf{U}\in H$ $\exists \mathcal{U}\in H_D$ with $\RL_\mathrm{AS}(\mathcal{U})= \mathbf{U}$ and
	\begin{align*}
		c_T b(\mathcal{U},\mathcal{U}) \le a(\RL_\mathrm{AS}(\mathcal{U}),\RL_\mathrm{AS}(\mathcal{U}))=a( \mathbf{U}, \mathbf{U}).
	\end{align*}
	In particular, such a constant $c_T$ is given by
	\begin{align}
		\label{eq:c_T}
		c_T \vcentcolon= \frac{1}{1 + \MC \tau_0},
	\end{align}
	where $\tau_0$ will be defined in \eqref{eq:tauOneLevelAS} and with $k_1$ as defined in \Cref{Lemma_k1}.
\end{lemma}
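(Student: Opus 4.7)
The plan is to construct an explicit decomposition $\mathcal{U} = (\mathbf{U}_0, (\mathbf{U}_i)_{1\le i\le N}) \in H_D$ of a given $\mathbf{U} \in H$ that exploits the structure of $V_G$: the ``easy'' near-kernel directions in each $G_i$ get absorbed into the coarse space, and only the complement in $\Gw$ feeds the subdomain pieces. Concretely, I would take $\mathbf{U}_0$ to be any vector satisfying $Z\mathbf{U}_0 = P_0 \mathbf{U}$, which is well-defined because $P_0 \mathbf{U} \in V_G = \mathrm{range}(Z)$, and set
\begin{equation*}
\mathbf{U}_i := D_i (I - \xi_{0i}) R_i \mathbf{U}, \qquad 1 \le i \le N.
\end{equation*}
The key design choice is to apply $\xi_{0i}$ \emph{before} the partition-of-unity weight $D_i$, which ensures that the correction $R_i^T D_i \xi_{0i} R_i \mathbf{U}$ removed from the naive partition-of-unity decomposition lies in $R_i^T D_i G_i \subset V_G$ (since $\xi_{0i} R_i \mathbf{U} \in G_i$ by construction of $\xi_{0i}$).

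The first thing to check is that this is really a decomposition of $\mathbf{U}$. Using the partition of unity \eqref{eq:PoU},
\begin{equation*}
\sum_{i=1}^N R_i^T \mathbf{U}_i = \mathbf{U} - \mathbf{V}_0, \qquad \mathbf{V}_0 := \sum_{i=1}^N R_i^T D_i \xi_{0i} R_i \mathbf{U},
\end{equation*}
and by the observation above $\mathbf{V}_0 \in V_0$. Hence $(I - P_0)\sum_i R_i^T \mathbf{U}_i = (I - P_0) \mathbf{U}$, and adding the coarse term $Z\mathbf{U}_0 = P_0 \mathbf{U}$ recovers $\RL_\mathrm{AS}(\mathcal{U}) = \mathbf{U}$.

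Next I would split the bound on $b(\mathcal{U},\mathcal{U})$ into coarse and local contributions. The coarse contribution is immediate since $P_0$ is an $A$-orthogonal projection,
\begin{equation*}
(E\mathbf{U}_0, \mathbf{U}_0) = \|Z\mathbf{U}_0\|_A^2 = \|P_0 \mathbf{U}\|_A^2 \le a(\mathbf{U},\mathbf{U}).
\end{equation*}
For the local contributions, the vector $(I-\xi_{0i})R_i \mathbf{U}$ lies in $\Gw$ by \Cref{def:OrthPandCs}, which is precisely the subspace on which the forthcoming constant $\tau_0$ from \eqref{eq:tauOneLevelAS} should be defined so as to yield a bound of the form $(A_i \mathbf{U}_i, \mathbf{U}_i) \le \tau_0 \,(A_i^\mathrm{Neu} R_i \mathbf{U}, R_i \mathbf{U})$. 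Summing over $i$ and applying \Cref{Lemma_k1} to the right-hand side gives $\sum_i (A_i \mathbf{U}_i,\mathbf{U}_i) \le k_1 \tau_0 \, a(\mathbf{U},\mathbf{U})$, and combining with the coarse estimate produces $b(\mathcal{U},\mathcal{U}) \le (1 + k_1 \tau_0)\, a(\mathbf{U},\mathbf{U})$, which is exactly \eqref{eq:c_T}.

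The main obstacle is not the bookkeeping of the decomposition but rather pinning down a manageable value of $\tau_0$: with only $G_i$ captured in the coarse space, any residual near-kernel modes of $A_i$ sitting in $\Gw$ --- for instance those induced by heterogeneity or by holes in subdomain $i$ --- will make $\tau_0$ blow up. This is precisely the obstruction the subsequent GEVP-based enrichment of $V_G$ in this section is designed to eliminate, bringing $\tau_0$ down to a prescribed user threshold and thereby turning \eqref{eq:c_T} into a condition number bound that is robust with respect to coefficient contrast and domain topology.
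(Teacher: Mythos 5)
Your proposal is correct and follows essentially the same route as the paper: the identical decomposition $\mathcal{U} = (\mathbf{U}_0, (D_i(I-\xi_{0i})R_i\mathbf{U})_i)$, the same observation that $\sum_i R_i^T D_i \xi_{0i} R_i \mathbf{U} \in V_G$ is annihilated by $I-P_0$, the same $A$-orthogonality bound for the coarse term, and the same use of $\tau_0$ from \eqref{eq:tauOneLevelAS} followed by \Cref{Lemma_k1} for the local terms. The only cosmetic difference is that the paper defines $\tau_0$ as a maximum of the Rayleigh-type ratio over all of $\R^{n_j}$ (with $I-\xi_{0j}$ appearing inside the numerator) rather than restricting to $\Gw$, but this does not change the argument.
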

\begin{proof}
	Note that for $\mathbf{U} \in \HO$, we have
	\begin{align*}
		\mathbf{U} &= P_0 \mathbf{U} + (I - P_0) \mathbf{U} = P_0 \mathbf{U} + (I - P_0) \sum_{i=1}^N R_i^T D_i R_i \mathbf{U} \\
		&= P_0 \mathbf{U} + (I - P_0) \sum_{i=1}^N R_i^T D_i (I - \xi_{0i}) R_i \mathbf{U} + \underbrace{(I - P_0) \sum_{i=1}^N R_i^T D_i \xi_{0i} R_i \mathbf{U}}_{\mathop=0}.
	\end{align*}
	Let us expand on the last line and observe that, since $P_0 \mathbf{U} \in V_G$, there exists $\mathbf{U}_0$ such that $Z \mathbf{U}_0 = P_0 \mathbf{U}$ and, further, the third term is zero since $\sum_{i=1}^N R_i^T D_i \xi_{0i} R_i \mathbf{U} \in V_G$. Therefore, we have
	\begin{align*}
		\mathbf{U} = \RL_\mathrm{AS}(\mathcal{U}),\, \mathcal{U} = \left(\mathbf{U}_0, (D_i (I - \xi_{0i}) R_i \mathbf{U})_{1\leq i\leq N}\right).
	\end{align*}
	Determining the stability of this decomposition consists in the estimation of a constant $c_T > 0$ such that
	\begin{align*}
		c_T \left[(E\mathbf{U}_0, \mathbf{U}_0) + \sum_{j=1}^N (R_j A R_j^T D_j (I-\xi_{0j}) R_j \mathbf{U}, D_j (I-\xi_{0j}) R_j \mathbf{U})\right] \leq a(\mathbf{U}, \mathbf{U}).
	\end{align*}
	Introducing $A_j^\mathrm{Neu}$ and, in the second step, applying \Cref{Lemma_k1} we have
	\begin{align}
		\label{eq:estimateAS}
		\begin{split}
			\sum_{j=1}^N \left(R_j A R_j^T D_j (I-\xi_{0j}) R_j \mathbf{U}, D_j (I-\xi_{0j}) R_j \mathbf{U}\right) &\le \tau_0 \sum_{j=1}^N \left(A_j^\mathrm{Neu} R_j \mathbf{U}, R_j \mathbf{U}\right)\\ &\le \tau_0 \, \MC \, a(\mathbf{U}, \mathbf{U}),
		\end{split}
	\end{align}
	where we define
	\begin{align}
		\label{eq:tauOneLevelAS}
		\tau_0 \vcentcolon= \max_{1 \le j \le N} \max_{\mathbf{V}_j \in \R^{n_j}} \frac{\left(R_j A R_j^T D_j (I-\xi_{0j}) \mathbf{V}_j, D_j (I-\xi_{0j}) \mathbf{V}_j\right)}{\left(A_j^\mathrm{Neu} \mathbf{V}_j, \mathbf{V}_j\right)},
	\end{align}
	so that the first inequality holds. Now by applying \eqref{eq:estimateAS}, we obtain
	\begin{align*}
		b(\mathcal{U},\mathcal{U}) & = (E\mathbf{U}_0, \mathbf{U}_0) + \sum_{j=1}^N \left(R_j A R_j^T D_j (I-\xi_{0j}) R_j \mathbf{U}, D_j (I-\xi_{0j}) R_j \mathbf{U}\right) \\
		&\leq a(P_0\mathbf{U}, P_0\mathbf{U}) + \tau_0 \, \MC \, a(\mathbf{U}, \mathbf{U}) \le (1 + \MC \tau_0) \, a(\mathbf{U}, \mathbf{U}).
	\end{align*}
	Thus the stable decomposition estimate holds with the constant $c_T$ in \eqref{eq:c_T}.
\end{proof}

\begin{theorem}[Convergence of additive Schwarz with the split near-kernel coarse space]
	The spectral condition number estimate for the two-level additive Schwarz preconditioner $M_\mathrm{AS,SNK}^{-1} \vcentcolon= \RL_\mathrm{AS}B^{-1}\RL_\mathrm{AS}^*$ is
	\begin{align*}
		\kappa(M_\mathrm{AS,SNK}^{-1} A) \le (1 + \MC \tau_0) \NC.
	\end{align*}
\end{theorem}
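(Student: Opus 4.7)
The proof should be essentially a plug-and-play application of the fictitious space lemma (\Cref{FSL}), since the bulk of the work has already been done in \Cref{lem:surj_cont} and \Cref{lem:stable}. My plan is first to identify the operator $\RL_\mathrm{AS} B^{-1} \RL_\mathrm{AS}^\ast$ appearing in the FSL conclusion with the two-level additive Schwarz preconditioner \eqref{eq:AS2} built on the SNK coarse space, and then to read off the spectral condition number bound from the two constants already established.

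For the identification step, equip $\HP$ in \eqref{eq:H_D} with the Euclidean inner product so that the operator $B$ associated with the bilinear form $b(\cdot,\cdot)$ defined in \eqref{eq:b} is the block-diagonal operator $B = \operatorname{diag}(E, A_1, \ldots, A_N)$ with inverse $B^{-1} = \operatorname{diag}(E^{-1}, A_1^{-1}, \ldots, A_N^{-1})$. I would then compute $\RL_\mathrm{AS}^\ast \colon \HO \to \HP$ from the defining identity $(\RL_\mathrm{AS} \mathcal{U}, \mathbf{V}) = (\mathcal{U}, \RL_\mathrm{AS}^\ast \mathbf{V})_D$ by unpacking \eqref{eq:RAS}; this directly yields $\RL_\mathrm{AS}^\ast \mathbf{V} = (Z^T \mathbf{V}, (R_i (I - P_0^T) \mathbf{V})_{1 \le i \le N})$. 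Composing with $\RL_\mathrm{AS}$ on the left then gives
\begin{align*}
\RL_\mathrm{AS} B^{-1} \RL_\mathrm{AS}^\ast = Z E^{-1} Z^T + (I - P_0) \sum_{i=1}^N R_i^T A_i^{-1} R_i (I - P_0^T),
\end{align*}
which is exactly $M_\mathrm{AS,2}^{-1}$ from \eqref{eq:AS2} with the SNK coarse space from \Cref{def:near-kernelCS}, matching the definition of $M_\mathrm{AS,SNK}^{-1}$ in the statement.

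With this identification in hand, the remainder is immediate: \Cref{lem:surj_cont} furnishes surjectivity and continuity of $\RL_\mathrm{AS}$ with constant $c_R = \NC$, while \Cref{lem:stable} furnishes stable decomposition with constant $c_T = (1 + \MC \tau_0)^{-1}$. Feeding these into the FSL conclusion gives
\begin{align*}
\kappa_2(M_\mathrm{AS,SNK}^{-1} A) \le c_T^{-1} c_R = (1 + \MC \tau_0) \NC,
\end{align*}
which is the claimed bound.

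There is no genuine obstacle here beyond bookkeeping; the only mild subtlety is making sure the adjoint computation uses the product Euclidean inner product on $\HP$ consistently with the choice that makes $B$ block diagonal, so that the resulting formula for $\RL_\mathrm{AS} B^{-1} \RL_\mathrm{AS}^\ast$ coincides with the standard symmetric two-level additive Schwarz preconditioner. Once that is verified, the theorem is a one-line consequence of the FSL applied to the two previously established lemmas.
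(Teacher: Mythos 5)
Your proposal is correct and follows exactly the paper's route: the theorem is stated there as a direct consequence of \Cref{lem:surj_cont,lem:stable} via the FSL (\Cref{FSL}), with $c_R = \NC$ and $c_T = (1+\MC\tau_0)^{-1}$. Your explicit computation of the adjoint and the identification of $\RL_\mathrm{AS}B^{-1}\RL_\mathrm{AS}^*$ with the symmetric two-level form \eqref{eq:AS2} is the same bookkeeping the paper carries out (for the GenEO-enriched variant) just after \Cref{lem:stable2}.
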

The proof is a direct consequence of \Cref{lem:surj_cont,lem:stable}, which give the necessary criterion to apply the FSL (\Cref{FSL}).

We note that this spectral condition number depends on the constants $k_0$ and $k_1$ which are fixed and bounded for a given decomposition into subdomains. However, the remaining constant $\tau_0$ may be very large due to the shape and size of the domain or the heterogeneities in the coefficients of the problem, leading to a large condition number. The fix is to enlarge the coarse space by a generalised eigenvalue problem (GEVP) induced by the formula~\eqref{eq:tauOneLevelAS}. More precisely, we introduce in the following a generalised eigenvalue problem and the related GenEO coarse space.

\subsection{The GenEO coarse space}

We start by introducing the relevant GEVP that will yield the appropriate GenEO eigenvectors that should be added in addition to the split near-kernel coarse space.
\begin{gevp}[Generalised eigenvalue problem for the lower bound]\label{gevp:tauthresholdAS}
	For each of the subdomains $1 \le j \le N$, we introduce the generalised eigenvalue problem:
	\begin{align}
		\label{eq:eigAtildeBAS}
		\begin{array}{c}
			\text{Find } (\mathbf{V}_{jk},\lambda_{jk}) \in \R^{n_j} \setminus \{0\} \times \R
			\mbox{ such that}\\[1ex]
			(I-\xi_{0j}^T) D_j R_j A R_j^T D_j (I-\xi_{0j}) \mathbf{V}_{jk} = \lambda_{jk} A_j^\mathrm{Neu} \mathbf{V}_{jk}.
		\end{array}
	\end{align}
	Let $\tau>0$ be a user-defined threshold, we define $V_{j,\mathrm{GenEO}}^\tau \subset \R^{n}$ as the vector space spanned by the family of vectors $(R_j^T D_j (I - \xi_{0j}) \mathbf{V}_{jk})_{\lambda_{jk} > \tau}$ corresponding to eigenvalues larger than $\tau$. Further, let $V_\mathrm{GenEO}^\tau$ be the vector space spanned by the collection over all subdomains of the vector spaces $(V_{j,\mathrm{GenEO}}^\tau)_{1 \le j \le N}$.
\end{gevp}
\begin{remark}
 The classical GenEO coarse space is built from the following GEVP:
	\begin{align}
	\label{eq:eigGenEOClassis}
	\begin{array}{c}
	\text{Find } (\mathbf{V}_{jk},\lambda_{jk}) \in \R^{n_j} \setminus \{0\} \times \R
	\mbox{ such that}\\[1ex]
	 D_j R_j A R_j^T D_j \mathbf{V}_{jk} = \lambda_{jk} A_j^\mathrm{Neu} \mathbf{V}_{jk}\,,
	\end{array}
	\end{align}
 and selecting eigenvectors with $\lambda_{jk} > \tau$. Our experience in this instance is that there are too many large eigenvalues such that the eigensolver has difficulty finding them. This is addressed by the filtering technique introduced above. As a result, the cost remains comparable to that of classical GenEO approaches applied to scalar problems, such as Darcy flow or elasticity. In this context, the overhead is not a practical limitation, even in large-scale runs (see also~\cite{jolivet2013scalable,nataf2023geneo}). Each eigenproblem is local to a subdomain, thus embarrassingly parallel, and its cost depends only on the subdomain size, which can be adjusted by the user. 
\end{remark}

In the theory that follows for the stable decomposition estimate but not in the algorithm itself, we will make use of the projection $\pi_j$ which we now define.

\begin{definition}[Projection onto GenEO eigenspace]\label{def:proj_pi}
	Let $\pi_j$ be the projection from $R^{n_j}$ on
	\begin{align*}
		V_{j,\tau} \vcentcolon= \mathop\mathrm{span} \left\{ \mathbf{V}_{jk} |\, \lambda_{jk} > \tau \right\}
	\end{align*}
	parallel to the complement $\mathop\mathrm{span}\{ \mathbf{V}_{jk} |\, \lambda_{jk} \le \tau \}$.
\end{definition}
The key to understanding \Cref{gevp:tauthresholdAS} is the following bound, which can be derived directly from \cite[Lemma~7.7]{Dolean:2015:IDDSiam}.
\begin{lemma}\label{lem:GEVPbound}
	Consider \Cref{gevp:tauthresholdAS} and the projection $\pi_{j}$ from \Cref{def:proj_pi}. Then for all $\mathbf{U}_j \in R^{n_j}$ we have
	\begin{align}
		\label{eq:gevpLowerBoundAS}
		\left(R_j A R_j^T D_j (I-\xi_{0j}) (I-\pi_j) \mathbf{U}_j, D_j (I-\xi_{0j}) (I-\pi_j) \mathbf{U}_j\right) &\le \tau\, \left(A_j^\mathrm{Neu} \mathbf{U}_j, \mathbf{U}_j\right),
	\end{align}
	where we reiterate that $\tau > 0$ is a user-defined threshold within the GEVP.
\end{lemma}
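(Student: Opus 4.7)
The plan is to recognise that the inequality to be shown is the classical GenEO spectral bound applied to the operator appearing on the left-hand side of \Cref{gevp:tauthresholdAS}. First, I would denote $\widetilde{A}_j := (I-\xi_{0j}^T) D_j R_j A R_j^T D_j (I-\xi_{0j})$, which is symmetric positive semi-definite. Using that $D_j$ is diagonal (hence symmetric), a direct manipulation of the Euclidean inner product shows that the left-hand side of \eqref{eq:gevpLowerBoundAS} equals $(\widetilde{A}_j (I-\pi_j)\mathbf{U}_j,\,(I-\pi_j)\mathbf{U}_j)$, so the claim reduces to a Rayleigh-quotient bound for $\widetilde{A}_j$ relative to $A_j^\mathrm{Neu}$.

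Next, I would exploit the symmetry of the generalised eigenproblem \eqref{eq:eigAtildeBAS}. Since $A_j^\mathrm{Neu}$ is symmetric positive definite in the positive Maxwell setting (the mass term in \eqref{eq:bilinearformposmax} guarantees this on every subdomain), the eigenvectors $(\mathbf{V}_{jk})_k$ form an $A_j^\mathrm{Neu}$-orthogonal basis of $\R^{n_j}$. Expanding $\mathbf{U}_j = \sum_k \alpha_k \mathbf{V}_{jk}$, by \Cref{def:proj_pi} one has $(I-\pi_j)\mathbf{U}_j = \sum_{k:\lambda_{jk}\le \tau} \alpha_k \mathbf{V}_{jk}$. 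Applying $\widetilde{A}_j$ termwise via \eqref{eq:eigAtildeBAS} and using $A_j^\mathrm{Neu}$-orthogonality to drop the cross-terms gives
\begin{align*}
(\widetilde{A}_j(I-\pi_j)\mathbf{U}_j,\,(I-\pi_j)\mathbf{U}_j) = \sum_{k:\lambda_{jk}\le\tau}\alpha_k^2 \lambda_{jk}(A_j^\mathrm{Neu}\mathbf{V}_{jk},\mathbf{V}_{jk}) \le \tau\,(A_j^\mathrm{Neu}(I-\pi_j)\mathbf{U}_j,\,(I-\pi_j)\mathbf{U}_j).
\end{align*}
To finish, I would use that $\pi_j$ is $A_j^\mathrm{Neu}$-orthogonal (eigenvectors of distinct eigenvalues are automatically $A_j^\mathrm{Neu}$-orthogonal, and within any repeated eigenvalue one can orthogonalise without affecting \Cref{def:proj_pi}), so that $(A_j^\mathrm{Neu}(I-\pi_j)\mathbf{U}_j,\,(I-\pi_j)\mathbf{U}_j) \le (A_j^\mathrm{Neu}\mathbf{U}_j,\mathbf{U}_j)$, which combined with the previous chain yields \eqref{eq:gevpLowerBoundAS}.

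The argument is essentially mechanical once the eigenbasis is in hand; the only non-trivial point is the $A_j^\mathrm{Neu}$-orthogonality of $\pi_j$, which rests on the symmetric-definite structure of the GEVP. This is precisely the content packaged in Lemma~7.7 of \cite{Dolean:2015:IDDSiam}, so the proof really is an immediate specialisation of that result to our operator $\widetilde{A}_j$, requiring no new ingredient beyond identifying the LHS of \eqref{eq:gevpLowerBoundAS} with $(\widetilde{A}_j(I-\pi_j)\mathbf{U}_j,(I-\pi_j)\mathbf{U}_j)$.
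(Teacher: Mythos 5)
Your proposal is correct and matches the paper's route: the paper gives no separate argument for this lemma but simply invokes Lemma~7.7 of \cite{Dolean:2015:IDDSiam}, and your proof is exactly the standard derivation of that abstract result specialised to $\widetilde{A}_j = (I-\xi_{0j}^T)D_jR_jAR_j^TD_j(I-\xi_{0j})$ versus $A_j^\mathrm{Neu}$ (eigenbasis expansion, $A_j^\mathrm{Neu}$-orthogonality of the cross terms, and the Pythagorean step for $I-\pi_j$). You also correctly flag the one point the citation leaves implicit, namely that $A_j^\mathrm{Neu}$ is SPD here thanks to the $\gamma\varepsilon$ mass term, which is what makes the symmetric-definite GEVP machinery applicable.
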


We can now build the coarse space $V_0$ from the split near-kernel space $V_{G}$ in \Cref{def:near-kernelCS} and the GenEO space $V_\mathrm{GenEO}^\tau$ in \Cref{gevp:tauthresholdAS} by defining the vector space sum
\begin{align}
	\label{eq:V0GenEOAS}
	V_0 \vcentcolon= V_G + V_\mathrm{GenEO}^\tau.
\end{align}
The coarse space $V_0$ is spanned by the columns of a full rank rectangular matrix $Z$ with $n_0$ columns. We can now define the abstract framework for the two-level additive Schwarz preconditioner with the split near-kernel and GenEO coarse space. The definition of $\HP$ and the bilinear form $b(\cdot,\cdot)$ follow the same as in \eqref{eq:H_D} and \eqref{eq:b}, only with the definition of the space $V_0$ being different. We denote by $\RL_\mathrm{AS,2} \colon \HP \rightarrow \HO$ the linear operator linking the new product Hilbert space $\HP$ to $\HO$, defined identically to that in \eqref{eq:RAS} except that $Z$ and $P_{0}$ now stem from the newly enhanced coarse space \eqref{eq:V0GenEOAS}. In order to apply the FSL (\Cref{FSL}) we must check the three key assumptions as before. Surjectivity and continuity of $\RL_\mathrm{AS,2}$ follows identically to that for $\RL_\mathrm{AS}$ as in \Cref{lem:surj_cont}, thus, we need only consider the stable decomposition property.

\begin{lemma}[Stable decomposition with $\RL_\mathrm{AS,2}$]\label{lem:stable2}
	The operator $\RL_\mathrm{AS,2} \colon \HP \rightarrow \HO$ verifies the stable decomposition property required by \Cref{FSL} where, for a user-defined threshold $\tau>0$, the constant can be taken as
	\begin{align}
		\label{eq:c_T-GenEO}
		c_T \vcentcolon= \frac{1}{1 + \MC \tau}.
	\end{align}
\end{lemma}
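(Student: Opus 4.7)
The plan is to follow the same blueprint as \Cref{lem:stable}, but to exploit the GenEO enrichment of the coarse space to replace the factor $\tau_0$ (which was problem-dependent) with the user-chosen threshold $\tau$. The main new ingredient is to insert the projection $\pi_j$ from \Cref{def:proj_pi} into the decomposition and argue that the resulting ``coarse-like'' piece lies in $V_0$ and hence is killed by $I-P_0$.

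Concretely, for $\mathbf{U} \in \HO$ I would start from the same chain of identities as in \Cref{lem:stable}, but now further split the $(I-\xi_{0i})$ factor using $\pi_i$:
\begin{align*}
\mathbf{U} &= P_0 \mathbf{U} + (I-P_0)\sum_{i=1}^N R_i^T D_i (I-\xi_{0i})(I-\pi_i) R_i \mathbf{U} \\
&\quad + (I-P_0)\sum_{i=1}^N R_i^T D_i (I-\xi_{0i})\pi_i R_i \mathbf{U}.
\end{align*}
The first extra term is harmless. The last term vanishes because, by construction of $V_{j,\mathrm{GenEO}}^\tau$, each summand $R_i^T D_i (I-\xi_{0i})\pi_i R_i \mathbf{U}$ lies in $V_{j,\mathrm{GenEO}}^\tau \subset V_0$, so $(I-P_0)$ annihilates it. Writing $Z\mathbf{U}_0 = P_0\mathbf{U}$, I thus propose the decomposition
\begin{align*}
\mathcal{U} = \bigl(\mathbf{U}_0, \, (D_i(I-\xi_{0i})(I-\pi_i) R_i\mathbf{U})_{1\le i\le N}\bigr),
\end{align*}
which satisfies $\RL_\mathrm{AS,2}(\mathcal{U}) = \mathbf{U}$.

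Next, I would estimate $b(\mathcal{U},\mathcal{U})$. The coarse contribution is controlled by $A$-orthogonality of $P_0$: $(E\mathbf{U}_0,\mathbf{U}_0) = a(P_0\mathbf{U},P_0\mathbf{U}) \le a(\mathbf{U},\mathbf{U})$. For the local contributions $(R_j A R_j^T D_j(I-\xi_{0j})(I-\pi_j)R_j\mathbf{U},\,D_j(I-\xi_{0j})(I-\pi_j)R_j\mathbf{U})$, I would apply \Cref{lem:GEVPbound} with $\mathbf{U}_j = R_j\mathbf{U}$, which is exactly designed to bound this quantity by $\tau\,(A_j^\mathrm{Neu} R_j\mathbf{U}, R_j\mathbf{U})$. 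Summing over $j$ and invoking \Cref{Lemma_k1} yields the upper bound $\MC\tau\,a(\mathbf{U},\mathbf{U})$.

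Combining these two estimates gives $b(\mathcal{U},\mathcal{U}) \le (1 + \MC\tau)\,a(\mathbf{U},\mathbf{U})$, which is exactly the stable decomposition with constant $c_T$ as in \eqref{eq:c_T-GenEO}. The only subtle step is the vanishing of the ``$\pi_i$ part'' under $I-P_0$: this is where the GenEO enrichment is used in an essential way, and it is what allows $\tau_0$ to be replaced by the prescribed threshold $\tau$. Everything else is a direct transcription of the argument in \Cref{lem:stable}, so I do not anticipate any further obstacle.
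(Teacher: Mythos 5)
Your proposal is correct and follows essentially the same route as the paper's proof: the same decomposition with the extra $\pi_i$ splitting, the same observation that the $\pi_i$ part is annihilated by $I-P_0$ because it lies in $V_\mathrm{GenEO}^\tau \subset V_0$, and the same application of \Cref{lem:GEVPbound} followed by \Cref{Lemma_k1}. No gaps.
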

\begin{proof}	
	Consider $\mathbf{U} \in \HO$, then following a similar argument to before we have
	\begin{align*}
		\mathbf{U} = P_0 \mathbf{U} &+ (I - P_0) \sum_{i=1}^N R_i^T D_i R_i \mathbf{U} \\
		= P_0 \mathbf{U} &+ (I - P_0) \sum_{i=1}^N R_i^T D_i (I - \xi_{0i}) R_i \mathbf{U} + \underbrace{(I - P_0) \sum_{i=1}^N R_i^T D_i \xi_{0i} R_i \mathbf{U}}_{\mathop=0}\\
		= P_0 \mathbf{U} &+ (I - P_0) \sum_{i=1}^N R_i^T D_i (I - \xi_{0i}) (I - \pi_i) R_i \mathbf{U}\\ & + \underbrace{(I - P_0) \sum_{i=1}^N R_i^T D_i (I - \xi_{0i}) \pi_i R_i \mathbf{U}}_{\mathop=0}.
	\end{align*}
	Now the very last term is zero since $R_i^T D_i (I - \xi_{0i}) \pi_i R_i \mathbf{U} \in V_\mathrm{GenEO}^\tau \subset V_0$ for all $1 \le i \le N$. Let $\mathbf{U}_0 \in \R^{n_0}$ be such that $Z \mathbf{U}_0 = P_0\mathbf{U}$, then we can choose the decomposition
	\begin{align*}
		\mathbf{U} = \RL_\mathrm{AS,2}(\mathbf{U}_0, (D_i (I - \xi_{0i}) (I - \pi_i) R_i \mathbf{U})_{1 \leq i \leq N}).
	\end{align*}
	With this decomposition, using the GEVP bound \eqref{eq:gevpLowerBoundAS} from \Cref{lem:GEVPbound} and then \Cref{Lemma_k1}, we have
	\begin{align*}
		(E\mathbf{U}_0, \mathbf{U}_0) + \sum_{j=1}^N &\left(R_j A R_j^T D_j (I-\xi_{0j}) (I - \pi_j) R_j \mathbf{U} D_j (I-\xi_{0j}) (I - \pi_j) R_j \mathbf{U}\right)\\ &\le a(Z\mathbf{U}_0, Z\mathbf{U}_0) + \tau \sum_{j=1}^N \left(A_j^\mathrm{Neu} R_j \mathbf{U}, R_j \mathbf{U}\right)\\
		&\le a(P_0\mathbf{U}, P_0\mathbf{U}) + \tau \, \MC \, a(\mathbf{U}, \mathbf{U})\\
		&\le (1 + \MC \tau) \, a(\mathbf{U}, \mathbf{U}).
	\end{align*}
	Thus the stable decomposition property holds with constant \eqref{eq:c_T-GenEO}.
\end{proof}

This provides the necessary machinery to apply the FSL (\Cref{FSL}). Defining the resulting preconditioner as $M^{-1}_\mathrm{AS,2}$, namely being
\begin{align*}
	M^{-1}_\mathrm{AS,2} &= \RL_\mathrm{AS,2} B^{-1} \RL_\mathrm{AS,2}^* = Z (Z^T A Z)^{-1} Z^T + (I-P_0) \sum_{i=1}^N R_i^T B_i^{-1} R_i (I-P_0^T) \\
	&= Z (Z^T A Z)^{-1} Z^T + (I-P_0) \sum_{i=1}^N R_i^T (R_i A R_i^T)^{-1} R_i (I-P_0^T),
\end{align*}
we arrive at our main theorem as a corollary of the results just proved.

\begin{theorem}[Convergence of additive Schwarz with the split near-kernel and GenEO coarse space]
	For a given user-defined constant $\tau>0$, namely the eigenvalue threshold within \Cref{gevp:tauthresholdAS}, the spectral condition number estimate for the two-level additive Schwarz preconditioner $M^{-1}_\mathrm{AS,2}$, with coarse space stemming from the split near-kernel (\Cref{def:near-kernelCS}) and GenEO eigenvectors (\Cref{gevp:tauthresholdAS}), is given by
	\begin{align*}
		\kappa(M^{-1}_\mathrm{AS,2} A) \le (1 + \MC \tau) \NC.
	\end{align*}
\end{theorem}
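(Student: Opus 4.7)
The result is essentially a corollary of the Fictitious Space Lemma (\Cref{FSL}), so my plan is to assemble the three ingredients it requires for the map $\RL_\mathrm{AS,2}\colon \HP\to \HO$ and then read off the condition number bound as $c_T^{-1} c_R$. The bilinear form $b(\cdot,\cdot)$ on $\HP$ and the local matrices $A_i$ are unchanged from the SNK setting; only the coarse space, and hence $Z$, $E$, and $P_0$, have been enriched by the GenEO vectors from \Cref{gevp:tauthresholdAS}.

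For the first two FSL hypotheses, I would simply remark that the proof of \Cref{lem:surj_cont} goes through verbatim for $\RL_\mathrm{AS,2}$. Surjectivity used only the partition of unity \eqref{eq:PoU} together with the fact that $P_0 \mathbf{U}$ lies in the (new) coarse space by construction, so $P_0\mathbf{U} = Z\mathbf{U}_0$ for some $\mathbf{U}_0$; continuity used only the $A$-orthogonality of $I-P_0$ and \Cref{Lemma_k0}. Neither argument depends on what spans the coarse space, so continuity holds with $c_R \vcentcolon= \NC$.

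The stable decomposition hypothesis is exactly the content of \Cref{lem:stable2}, which supplies the decomposition $\mathcal{U} = \bigl(\mathbf{U}_0,\,(D_i(I-\xi_{0i})(I-\pi_i)R_i\mathbf{U})_{1\le i\le N}\bigr)$ with $Z\mathbf{U}_0 = P_0\mathbf{U}$, and gives $b(\mathcal{U},\mathcal{U}) \le (1+\MC \tau)\,a(\mathbf{U},\mathbf{U})$, i.e. $c_T \vcentcolon= (1+\MC\tau)^{-1}$.

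Applying \Cref{FSL} with these two constants then yields
\begin{align*}
\kappa_2(M^{-1}_\mathrm{AS,2} A) \;=\; \kappa_2(\RL_\mathrm{AS,2} B^{-1} \RL_\mathrm{AS,2}^{*} A) \;\le\; \frac{c_R}{c_T} \;=\; (1 + \MC \tau)\,\NC,
\end{align*}
which is the stated bound. I do not anticipate any real obstacle: the only subtlety to double-check is that $\RL_\mathrm{AS,2}$ as defined really equals $M^{-1}_\mathrm{AS,2}$ in the displayed form using $B_i^{-1} = A_i^{-1}$ (rather than $B_i^\dag$), which follows because the coarse space absorbs the gradient part so that the local components can be inverted on all of $\R^{n_i}$ without conflict with the $\xi_{0i}$ projections used only in the analysis.
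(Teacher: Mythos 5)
Your proposal is correct and follows essentially the same route as the paper: continuity and surjectivity of $\RL_\mathrm{AS,2}$ carry over verbatim from \Cref{lem:surj_cont} with $c_R=\NC$, the stable decomposition is exactly \Cref{lem:stable2} with $c_T=(1+\MC\tau)^{-1}$, and the FSL yields the bound $c_T^{-1}c_R$. Your closing remark about $B_i^{-1}=A_i^{-1}$ (with $\xi_{0i}$ and $\pi_i$ appearing only in the analysis, not in the algorithm) is also consistent with the paper's definition $b_i(\mathbf{U}_i,\mathbf{V}_i)=(A_i\mathbf{U}_i,\mathbf{V}_i)$.
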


\section{Numerical results}
\label{sec:numerical_results}

In this section we study two broad challenges, that of domains with a growing number of holes and that of increasing heterogeneity within the physical parameters $\varepsilon$ and $\mu$ in the problem, and compare both one-level and two-level methods along with AMS. We use different colours in our tabulated results to indicate the performance of a given method; namely, green for low and robust iteration counts, orange for relatively low but not robust iteration counts, and red for iteration counts which become large and lack robustness. The reader can find a summary of the trends observed in our experiments in \Cref{tab:summary} as part of the conclusions.

In \cref{sec:influence_of_geometry}, we first present weak scaling experiments for a reference test case consisting of a 3D beam with square cross section having size $\frac{N}{2} \times 1 \times 1$, where $N$ is the number of subdomains, which may additionally have holes; see \Cref{fig:holes} for the configuration with holes. In addition, we perform a strong scalability study on a 3D beam of fixed size, as reported in \Cref{tab:metis_partitioning_both}. We then study the dependence on $\gamma$ in \cref{sec:gamma}, where $\gamma$ varies from $10^{-5}$ to $10^{2}$ on a complex geometric configuration. Based on these results, we select $\gamma=10^{-3}$ for the other experiments provided since it is representative of a difficult case. In \cref{sec:heterogeneities}, we move to study the impact of varying coefficients $\varepsilon$ and $\mu$. More details on each experiment will be given below.

The beam is discretised using a Cartesian mesh with mesh spacing $h = \frac{1}{16}$, on which we use N\'ed\'elec edge elements of lowest order; this leads to linear systems of size ranging from $121,\!696$ dofs, for $N = 8$, to $3,\!869,\!472$ dofs, for $N = 256$. In all test configurations, the right-hand side of equation~\eqref{eq:positivemaxwell} is defined as a constant volume source term, $\boldsymbol{f}=(1,1,1)$. To solve the linear systems we use right-preconditioned GMRES\footnote{Unlike left-preconditioning, this ensures that we minimise (the residual) in the same norm, independent of the quality of the preconditioner, to give a reliable comparison in the challenging nearly singular cases.} along with a relative stopping criterion of $10^{-6}$. Aside from in the strong scalability study, the subdomain partitioning is uniform along the length of the beam, yielding subdomains with constant size as $N$ varies. One layer of elements is added in each direction to define the overlapping decomposition; this results in an overlap region of two elements width between neighbouring subdomains. The strong scalability tests are performed similarly but with an arbitrary mesh partitioning into subdomains given by METIS~\cite{karypis:1998:fast}. The GenEO portion of the coarse space is constructed by selecting eigenvectors of \eqref{eq:eigAtildeBAS} corresponding to eigenvalues larger than $\tau = 10$.

All our numerical tests were performed with the free open-source domain specific language FreeFEM~\cite{Hecht:2012:NDF} and for each test configuration we compare the iteration counts of the following methods:
\begin{itemize}
	\item AMS: the auxiliary-space Maxwell solver, originally introduced by Hiptmair and Xu \cite{Hiptmair:2007:NSP} and available via PETSc~\cite{petsc-user-ref};
	\item AS: the one-level additive Schwarz preconditioner;
	\item AS-SNK: the two-level Schwarz preconditioner with a coarse space consisting of the split near-kernel (SNK) from \Cref{def:near-kernelCS};
	\item AS-SNK-GenEO: the two-level Schwarz preconditioner with a coarse space consisting of SNK plus $V_\mathrm{GenEO}^\tau$ with the eigenvectors computed by \eqref{eq:eigAtildeBAS};
	\item AS-NK: the two-level Schwarz preconditioner with a coarse space consisting of the ``global'' near-kernel (NK), i.e., simply $G$ from \Cref{def:near-kernelCS};
	\item AS-NK-GenEO: the two-level Schwarz preconditioner with a coarse space consisting of NK plus $V_\mathrm{GenEO}^\tau$.
\end{itemize}

Note that AS-SNK can be seen as a generalisation of the Nicolaides coarse space for the Poisson problem \cite{nicolaides1987deflation} to the positive Maxwell system. Indeed, for a Poisson problem, the near-kernel is made of constant functions and the Nicolaides coarse space consists in splitting it amongst the subdomains and multiplying each local component by the partition of unity (PoU). For a positive Maxwell problem in trivial topology, the near-kernel is made of the gradients of $H^1$ functions and the SNK coarse space also splits this amongst the subdomains and multiplies by the PoU, hence the name SNK. Theoretical results can be found in \cite{liang2021preconditioners,liang:2023:sharp} and references therein. The AS-SNK-GenEO approach is the full two-level preconditioner $M^{-1}_{AS,2}$ with the GenEO-enriched coarse space we have designed and analysed in this work. For comparison, we also report iteration counts for the latter two variants of the two-level preconditioner listed above which are not covered by our theory: AS-NK and AS-NK-GenEO. In each table exhibiting scalability results, we report the number of unknowns (dofs) of the linear system, the size of the ``global'' near-kernel (NK), the size of the split near-kernel (SNK), and the global number of GenEO eigenvectors entering the coarse space.

\subsection{Influence of the geometry}
\label{sec:influence_of_geometry}

In a first test configuration, with results collected in \Cref{tab:homogeneous_beam_both} (top), we choose the constant coefficients $\mu = \varepsilon = 1$ and enforce homogeneous Dirichlet boundary conditions $\mathbf{E} \times \mathbf{n} = \mathbf{0}$ on all six faces of the beam. We can see that all methods are robust, including the one-level method (AS) due to the Dirichlet boundary conditions surrounding the elongated geometry. Note that the adaptive GenEO coarse space is empty as it automatically captures the fact that there is no need to enrich the near-kernel coarse space to reach the prescribed condition number.

\begin{table}[ht!]
	\label{tab:homogeneous_beam_both}
	\begin{center}
		\caption{A weak scalability study detailing iteration counts required for the homogeneous beam problem with Dirichlet (top) or mixed (bottom) boundary conditions.}
		\begin{tabular}{l|l|r|r|r|r|r|r}
			& \hfill $N$                       &   8  &   16 &   32 &   64 &  128  &   256 \\ \hline
			& \#dofs                           & 122K & 243K & 484K & 968K & 1935K & 3869K \\
			& NK size                          & 19K  & 37K  & 74K  & 148K & 296K  & 592K  \\
			& SNK size                         & 25K  & 50K  & 101K & 203K & 406K  & 813K  \\
			& GenEO size (Dirichlet)           & 0    & 0    & 0    & 0    & 0     & 0     \\
			& GenEO size (mixed)               & 0    & 0    & 0    & 0    & 0     & 0     \\ \hline\hline
			\multirow[b]{4}{*}{\rotatebox[origin=c]{90}{\quad Dirichlet}}
			& \ccg AMS           & \ccg 7   & \ccg 7   & \ccg 7   & \ccg 7   & \ccg 7    & \ccg 7    \\
			& \ccg AS            & \ccg 12  & \ccg 15  & \ccg 15  & \ccg 15  & \ccg 15   & \ccg 14   \\
			& \ccg AS-SNK        & \ccg 14  & \ccg 15  & \ccg 15  & \ccg 15  & \ccg 15   & \ccg 15   \\
			& \ccg AS-SNK-GenEO  & \ccg 14  & \ccg 15  & \ccg 15  & \ccg 15  & \ccg 15   & \ccg 15   \\
			& \ccg AS-NK         & \ccg 14  & \ccg 15  & \ccg 15  & \ccg 15  & \ccg 15   & \ccg 15   \\
			& \ccg AS-NK-GenEO   & \ccg 14  & \ccg 15  & \ccg 15  & \ccg 15  & \ccg 15   & \ccg 15   \\ \hline\hline
			\multirow[b]{4}{*}{\rotatebox[origin=c]{90}{\quad mixed}}
			& \ccg AMS           & \ccg 11  & \ccg  11 & \ccg  11 & \ccg  12 & \ccg  12  & \ccg  12  \\
			& \ccr AS            & \ccr 20  & \ccr  27 & \ccr  41 & \ccr  69 & \ccr 124  & \ccr  231 \\
			& \ccg AS-SNK        & \ccg 15  & \ccg  16 & \ccg  17 & \ccg  17 & \ccg  18  & \ccg  18  \\
			& \ccg AS-SNK-GenEO  & \ccg 15  & \ccg  16 & \ccg  17 & \ccg  17 & \ccg  18  & \ccg  18  \\
			& \cco AS-NK         & \cco 19  & \cco  24 & \cco  32 & \cco  41 & \cco  42  & \cco  42  \\
			& \cco AS-NK-GenEO   & \cco 19  & \cco  24 & \cco  32 & \cco  41 & \cco  42  & \cco  42  \\
		\end{tabular}
	\end{center}
\end{table}

In a second test configuration, Neumann boundary conditions are imposed on two of the four lateral faces of the beam (chosen to be opposing) and the other faces of the beam retain a Dirichlet boundary condition. Results using these mixed boundary conditions are reported in \Cref{tab:homogeneous_beam_both} (bottom) where we can see that AMS is still robust, while the one-level method is not, with iteration counts ranging from 20 to 231. The SNK coarse space is sufficient to have a robust method, albeit with the iteration count mildly increasing from 15 to 18, so that again the GenEO coarse space is empty. The increase is a little more marked for the global NK coarse space, with iteration counts ranging from 19 to 42, suggesting that treatment, such as the splitting we perform for the SNK, is necessary to obtain a fully robust preconditioner. Nonetheless, note that the size of the NK coarse space is smaller than that of the SNK coarse space and that iteration counts appear to stabilise in the low forties, which is not prohibitively high.

\begin{figure}[t]
	\centering
	\includegraphics[width=0.7\textwidth,clip=true, trim = 0cm 7.4cm 0cm 7.4cm]{./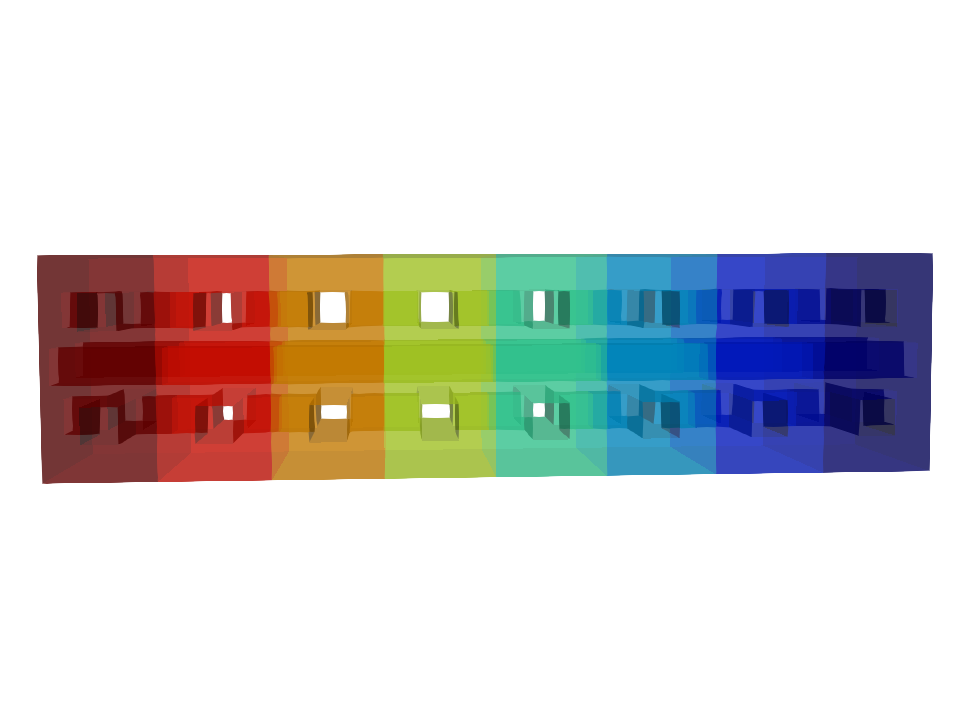}
	\includegraphics[width=0.28\textwidth]{./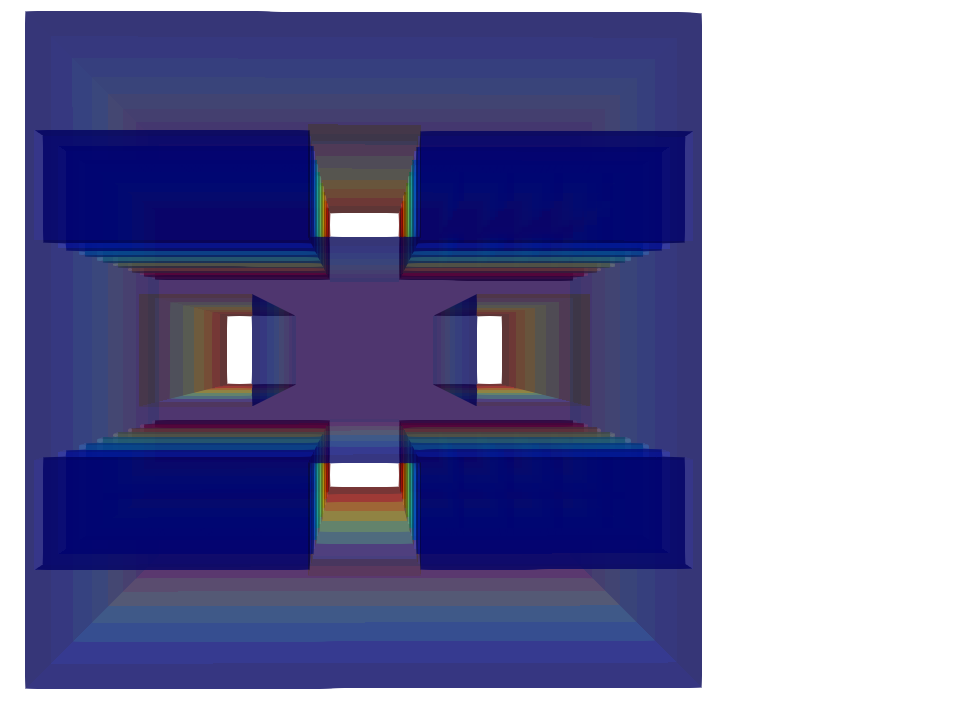}
	\caption{The beam test configuration with holes for $N=8$ subdomains, showing views from the side (left) and front (right) of the beam. The subdomain partitioning is also shown on the left with subdomains indicated by the different colours.}
	\label{fig:holes}
\end{figure}

\begin{table}[t!]
	\label{tab:beam_holes_both}
	\begin{center}
		\caption{A weak scalability study detailing iteration counts required for the homogeneous beam problem with holes (non-trivial topology) and Dirichlet (top) or mixed (bottom) boundary conditions.}
		\begin{tabular}{l|l|r|r|r|r|r|r}
			& \hfill $N$                       &   8  &   16 &   32 &   64 &  128  &   256 \\ \hline
			& \#dofs                           & 113K & 226K & 451K & 901K & 1800K & 3600K \\
			& NK size                          & 18K  & 36K  & 72K  & 144K & 288K  & 576K  \\
			& SNK size                         & 24K  & 49K  & 99K  & 198K & 397K  & 794K  \\
			& GenEO size (Dirichlet)           & 0    & 0    & 0    & 0    & 0     & 0     \\
			& GenEO size (mixed)               &   18 &   42 &  90  &  186 &   378 &  762  \\ \hline\hline
			\multirow[b]{4}{*}{\rotatebox[origin=c]{90}{\quad Dirichlet}}
			& \ccr AMS           & \ccr  44 & \ccr  71 & \ccr 169 & \ccr 364 & \ccr 660 & \ccr 813 \\
			& \cco AS           	& \cco  24 & \cco  36 & \cco  59 & \cco  79 & \cco  72 & \cco  58 \\
			& \ccg AS-SNK        & \ccg  14 & \ccg  15 & \ccg  15 & \ccg  15 & \ccg  15 & \ccg  15 \\
			& \ccg AS-SNK-GenEO  & \ccg  14 & \ccg  15 & \ccg  15 & \ccg  15 & \ccg  15 & \ccg  15 \\
			& \ccg AS-NK         & \ccg  17 & \ccg  18 & \ccg  18 & \ccg  18 & \ccg  17 & \ccg  17 \\
			& \ccg AS-NK-GenEO   & \ccg  17 & \ccg  18 & \ccg  18 & \ccg  18 & \ccg  17 & \ccg  17 \\ \hline\hline
			\multirow[b]{4}{*}{\rotatebox[origin=c]{90}{\quad mixed}}
			& \ccr AMS           & \ccr  43 & \ccr  67 & \ccr 113 & \ccr 321 & \ccr 588 & \ccr 1302\\
			& \ccr AS            & \ccr  37 & \ccr  61 & \ccr 106 & \ccr 173 & \ccr 294 & \ccr 557 \\
			& \ccr AS-SNK        & \ccr  36 & \ccr  62 & \ccr 109 & \ccr 202 & \ccr 383 & \ccr 554 \\
			& \ccg AS-SNK-GenEO  & \ccg  23 & \ccg  24 & \ccg  25 & \ccg  26 & \ccg  27 & \ccg  27 \\
			& \ccr AS-NK         & \ccr  36 & \ccr  62 & \ccr 111 & \ccr 203 & \ccr 386 & \ccr 588 \\
			& \cco AS-NK-GenEO   & \cco  24 & \cco  25 & \cco  25 & \cco  28 & \cco  30 & \cco  37 \\
		\end{tabular}
	\end{center}
\end{table}

\begin{figure}[t]
	\centering
	\includegraphics[width=0.63\textwidth,clip=true]{./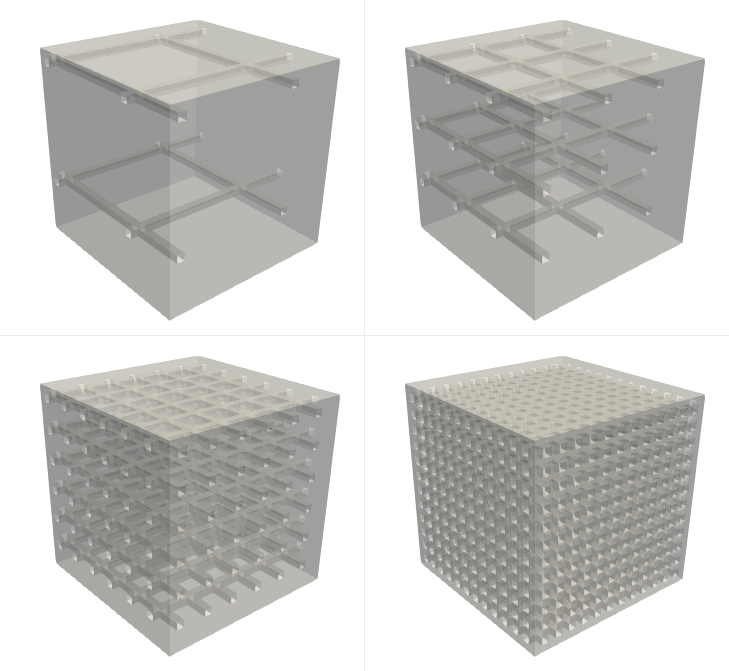}
	\caption{The unit cube test configuration with an increasing number of crossing holes in x-y directions: 8 (top left), 18 (top right), 72 (bottom left), and 288 (bottom right) holes.}
	\label{fig:cubes}
\end{figure}

For a third test configuration, we add various holes that extend across the domain. Four square holes extend along the whole length of the beam; see \Cref{fig:holes} (right). Further, two square holes perpendicularly cross each subdomain, from one lateral side to the opposite side, and also connect to two of the four holes along the length; see \Cref{fig:holes} (left). Since we perform a weak scalability test, the number of holes increases with the number of subdomains $N$. Homogeneous Dirichlet boundary conditions $\mathbf{E} \times \mathbf{n} = \mathbf{0}$ are enforced on the whole boundary of the domain. Results are gathered in \Cref{tab:beam_holes_both} (top). As we can see, with this configuration the AMS preconditioner is no longer robust, resulting in a significant degradation in performance, with iteration counts increasing from 44 to 813. Note that, in this case, we both elongate the domain while increasing the complexity of the topology. To further analyse this behaviour, we consider a fixed cube but with more and more holes in it; see \Cref{fig:cubes}. Results are detailed in \Cref{tab:cube}. The AMS approach is now scalable again and the decrease in the iteration counts from 26 to 18 is in agreement with the decrease in the problem size from 881k dofs to 696k dofs. These last two tables are in agreement with the theory of the AMS preconditioner, for which it is proved in \cite{Hiptmair:2020:review} to be robust with respect to the topology for a fixed sized domain; cf. \Cref{tab:cube}. But such a robustness result was not proved when both the domain is elongated and the topology complexity is increased. Our numerical results in \Cref{tab:beam_holes_both} suggest that it is indeed not the case. A possible fix might be to compute the full kernel of the curl operator for the geometry with holes which is larger than the gradients of $H^1$ functions. This can be done using homology computation tools, see \cite{Pellikka:2013:HCC}, but remains computationally expensive and it is unclear how such an approach can be incorporated into the AMS preconditioner. As for the one-level AS preconditioner, it also shows a lack of robustness here, while all two-level domain decomposition methods display nearly constant iteration counts and accordingly the GenEO coarse space is empty.

\begin{table}[t!]
	\begin{center}
		\caption{A study detailing iteration counts required for the unit cube problem with an increasing number of holes (see \Cref{fig:cubes}) and Dirichlet boundary conditions. METIS is used to partition the domain into $N=64$ subdomains.}
		\label{tab:cube}
		\begin{tabular}{l|r|r|r|r}
			\hfill \#holes                   & 8    & 18   & 72   & 288  \\ \hline
			\#dofs                           & 891K & 883K & 842K & 696K \\
			NK size                          & 132K & 132K & 129K & 120K \\
			SNK size                         & 221K & 219K & 210K & 185K \\
			GenEO size                       & 0    & 0    & 0    & 0    \\ \hline\hline
			\rowcolor{green!40}AMS           & 26   & 19   & 15   & 18   \\
			\rowcolor{orange!50}AS           & 83   & 71   & 69   & 61   \\
			\rowcolor{green!40}AS-SNK        & 36   & 33   & 32   & 28   \\
			\rowcolor{green!40}AS-SNK-GenEO  & 36   & 33   & 32   & 28   \\
			\rowcolor{orange!50}AS-NK        & 50   & 47   & 49   & 42   \\
			\rowcolor{orange!50}AS-NK-GenEO  & 50   & 47   & 49   & 42   \\
		\end{tabular}
	\end{center}
\end{table}

We return to considering the weak scaling setup with holes, as with \Cref{tab:beam_holes_both} (top), but now change the boundary conditions imposed on the boundaries created by holes from Dirichlet to Neumann. As shown in \Cref{tab:beam_holes_both} (bottom), for this configuration the AMS preconditioner is again no longer robust, resulting in a significant degradation in its performance, with iteration counts rising from 43 to 1302. The one-level AS preconditioner shows a similar lack of robustness, with iteration counts reaching 557. In contrast, the full AS-SNK-GenEO preconditioner shows nearly constant iteration counts, increasing only from 23 to 27, and hence good robustness. In this scenario, with mixed boundary conditions, the near-kernel coarse space needs to be enriched and GenEO selects around three vectors per subdomain to enter the coarse space. As for the two other variants, we can see that the global near-kernel (NK) alone is not enough. When adding the GenEO eigenvectors this leads to much better results but with iteration counts a little higher than with AS-SNK-GenEO and somewhat less favourable robustness.

In \Cref{tab:metis_partitioning_both} we move to consider a strong scalability study using a beam of size $8 \times 1 \times 1$, with the same hole configuration as in \Cref{tab:beam_holes_both} but note that now the number of holes is fixed. We vary the number of subdomains from $N=8$ to $N=256$ with this fixed problem size and geometry. In contrast to our previous experiments, where strip-wise partitioning was performed, here we use the automatic graph partitioner METIS~\cite{karypis:1998:fast} to decompose the mesh. We observe in case of Dirichlet conditions that, due to the fixed number of holes compared to the weak scaling experiment of \Cref{tab:beam_holes_both}, the iteration counts of AMS are stable but still relatively high (between 68 and 99). Moreover, the only scalable methods with low iteration counts are AS-SNK and AS-SNK-GenEO, which utilise the split near-kernel; GenEO not being required here.

\begin{table}[ht!]
	\begin{center}
		\caption{A strong scalability study detailing iteration counts required for the homogeneous beam problem of size $8 \times 1 \times 1$ with holes (non-trivial topology) and Dirichlet (top) or mixed (bottom) boundary conditions. The geometry is fixed and a METIS partitioning is used to define $N$ subdomains.}
		\label{tab:metis_partitioning_both}
		\begin{tabular}{l|l|r|r|r|r|r|r}
			& \hfill $N$                       &   8  &   16 &   32 &   64 &  128  &   256 \\ \hline
			& \#dofs                           & 226K & 226K & 226K & 226K & 226K  &  226K \\
			& NK size                          & 36K  & 36K  & 36K  & 36K  & 36K   &  36K  \\
			& SNK size                         & 43K  & 50K  & 58K  & 66K  & 82K   & 101K  \\
			& GenEO size (Dirichlet)           & 0    & 0    & 0    & 0    & 0     & 0     \\
			& GenEO size (mixed)               & 32   & 61   & 49   & 43   & 36    & 10    \\ \hline\hline
			\multirow[b]{4}{*}{\rotatebox[origin=c]{90}{\quad Dirichlet}}
			& \cco AMS             & \cco 97   & \cco 71   & \cco 68   & \cco 71   & \cco 77   & \cco 99    \\
			& \ccr AS              & \ccr 52   & \ccr 83   & \ccr 111  & \ccr 121  & \ccr 126  & \ccr 136   \\
			& \ccg AS-SNK          & \ccg 19   & \ccg 21   & \ccg 24   & \ccg 25   & \ccg 23   & \ccg 23    \\
			& \ccg AS-SNK-GenEO    & \ccg 19   & \ccg 21   & \ccg 24   & \ccg 25   & \ccg 23   & \ccg 23    \\
			& \cco AS-NK           & \cco 21   & \cco 24   & \cco 28   & \cco 30   & \cco 31   & \cco 32    \\
			& \cco AS-NK-GenEO     & \cco 21   & \cco 24   & \cco 28   & \cco 30   & \cco 31   & \cco 32    \\ \hline\hline
			\multirow[b]{4}{*}{\rotatebox[origin=c]{90}{\quad mixed}}
			& \ccr AMS             & \ccr 60   & \ccr 67   & \ccr 72   & \ccr 98   & \ccr 106  & \ccr 104   \\
			& \ccr AS              & \ccr 65   & \ccr 92   & \ccr 104  & \ccr 121  & \ccr 163  & \ccr 200   \\
			& \cco AS-SNK          & \cco 64   & \cco 87   & \cco 71   & \cco 35   & \cco 40   & \cco 25    \\
			& \ccg AS-SNK-GenEO    & \ccg 26   & \ccg 29   & \ccg 30   & \ccg 29   & \ccg 26   & \ccg 25    \\
			& \ccr AS-NK           & \ccr 64   & \ccr 89   & \ccr 102  & \ccr 113  & \ccr 155  & \ccr 181   \\
			& \ccr AS-NK-GenEO     & \ccr 38   & \ccr 65   & \ccr 95   & \ccr 109  & \ccr 156  & \ccr 181   \\
		\end{tabular}
	\end{center}
\end{table}

When Neumann boundary conditions (instead of Dirichlet boundary conditions) are imposed on the holes, we first observe that the iteration counts of AMS and AS now show a more substantial growth. Again, the only scalable methods for increasing $N$ are AS-SNK and AS-SNK-GenEO, which incorporate the split near-kernel (unlike in the case of \Cref{tab:beam_holes_both} (bottom) where AS-SNK shows the same bad behaviour as AS and AS-NK). Note that here the iteration counts of AS-SNK improve as the subdomains get smaller and, accordingly, the contribution of GenEO decreases to only $10$ eigenvectors in total for 256 subdomains, whereas in \Cref{tab:beam_holes_both} (mixed) the size of the GenEO coarse space is $3(N-2)$. This can be explained by the fact that, previously, all interior subdomains contained the same number of holes as $N$ increased, which is not the case here since subdomains are getting smaller and so there are less holes contained in the subdomains. We also observe in \Cref{tab:metis_partitioning_both} (along with results in our other experiments) that the SNK space is strictly larger than the NK space, which contains only gradients of functions. As the number of subdomains increases, the relative size of SNK compared to NK also increases (recall that the dimension of NK is fixed here) and, at some point, the SNK coarse space is nearly sufficient by itself. This difference is illustrated by the iteration counts of AS-NK and AS-NK-GenEO in \Cref{tab:metis_partitioning_both} (bottom), which grow to 181 for $N=256$, and further reveals that the NK space alone does not contain the relevant information on the presence of holes.

\subsection{Varying the parameter $\gamma$}
\label{sec:gamma}

In this section, we now consider the effect of varying the parameter $\gamma$ in \eqref{eq:bilinearformposmax}, which controls the size of the zeroth-order term, from $10^{-5}$ to $10^{2}$ for a decomposition into $N=256$ subdomains. \Cref{tab:gamma_no_hole_Dirichlet} considers the same beam problem and setup used in \Cref{tab:homogeneous_beam_both} (top) with $N=256$; the size of SNK and NK spaces staying the same. We see that, although the problem gets closer to being singular, all methods perform very well even for very small values of $\gamma$.

\begin{table}[ht!]
	\begin{center}
		\caption{Iteration counts required when varying the parameter $\gamma$ for the homogeneous beam problem (no holes) with Dirichlet boundary conditions and $N=256$ subdomains. Here we have \#dofs $= 3869K$, NK size $= 592K$, and SNK size $= 813K$.}
		\label{tab:gamma_no_hole_Dirichlet}
		\begin{tabular}{l|r|r|r|r|r|r|r|r}
			\hfill $\gamma$                  & $10^{-5}$ & $10^{-4}$ & $10^{-3}$ & $10^{-2}$ & $10^{-1}$ & $1$ & $10$ & $10^{2}$ \\ \hline
			GenEO size                       & 0  & 0  & 0  & 0  & 0  & 0  & 0  & 0  \\ \hline\hline
			\rowcolor{green!40}AMS           & 7  & 7  & 7  & 7  & 7  & 8  & 7  & 5  \\
			\rowcolor{green!40}AS            & 14 & 14 & 14 & 14 & 14 & 14 & 11 & 6  \\
			\rowcolor{green!40}AS-SNK        & 15 & 15 & 15 & 15 & 15 & 14 & 12 & 9  \\
			\rowcolor{green!40}AS-SNK-GenEO  & 15 & 15 & 15 & 15 & 15 & 14 & 12 & 9  \\
			\rowcolor{green!40}AS-NK         & 15 & 15 & 15 & 15 & 15 & 14 & 12 & 10 \\
			\rowcolor{green!40}AS-NK-GenEO   & 15 & 15 & 15 & 15 & 15 & 14 & 12 & 10 \\
		\end{tabular}
	\end{center}
\end{table}

In \Cref{tab:gamma_holes_both} we now add in holes to the beam and consider both Dirichlet and mixed boundary conditions, as in \Cref{tab:beam_holes_both}, again focusing on $N=256$. The smaller the value $\gamma>0$ takes, the harder the problem is. For the Dirichlet case this is seen by the AMS and one-level AS methods struggling. Otherwise, all our two-level methods perform very well and, accordingly, the GenEO portion of the coarse space is empty.

\begin{table}[ht!]
	\begin{center}
		\caption{Iteration counts required when varying the parameter $\gamma$ for the homogeneous beam problem with holes (non-trivial topology), Dirichlet (top) or mixed (bottom) boundary conditions, and $N=256$ subdomains. Here we have \#dofs $= 3600K$, NK size $= 576K$, and SNK size $= 794K$.}
		\label{tab:gamma_holes_both}
		\begin{tabular}{l|l|r|r|r|r|r|r|r|r}
			\multicolumn{2}{l|}{\hfill $\gamma$} & $10^{-5}$ & $10^{-4}$ & $10^{-3}$ & $10^{-2}$ & $10^{-1}$ & $1$ & $10$ & $10^{2}$ \\ \hline
			\multicolumn{2}{l|}{GenEO size (Dirichlet)} & 0    & 0    & 0    & 0   & 0   & 0   & 0  & 0  \\
			\multicolumn{2}{l|}{GenEO size (mixed)}     & 762  & 762  & 762  & 762 & 762 & 762 & 0  & 0  \\ \hline\hline
			\multirow[b]{4}{*}{\rotatebox[origin=c]{90}{\quad Dirichlet}}
			& \ccr AMS           & \ccr 494  & \ccr 569  & \ccr 813  & \ccr 342 & \ccr 98  & \ccr 38  & \ccr 16 & \ccr 8  \\
			& \cco AS            & \cco 54   & \cco 54   & \cco 58   & \cco 84  & \cco 46  & \cco 23  & \cco 13 & \cco 7  \\
			& \ccg AS-SNK        & \ccg 15   & \ccg 15   & \ccg 15   & \ccg 15  & \ccg 15  & \ccg 14  & \ccg 12 & \ccg 9  \\
			& \ccg AS-SNK-GenEO  & \ccg 15   & \ccg 15   & \ccg 15   & \ccg 15  & \ccg 15  & \ccg 14  & \ccg 12 & \ccg 9  \\
			& \ccg AS-NK         & \ccg 17   & \ccg 17   & \ccg 17   & \ccg 17  & \ccg 16  & \ccg 14  & \ccg 12 & \ccg 9  \\
			& \ccg AS-NK-GenEO   & \ccg 17   & \ccg 17   & \ccg 17   & \ccg 17  & \ccg 16  & \ccg 14  & \ccg 12 & \ccg 9  \\ \hline\hline
			\multirow[b]{4}{*}{\rotatebox[origin=c]{90}{\quad mixed}}
			& \ccr AMS           & \ccr 2057 & \ccr 2193 & \ccr 1302 & \ccr 303 & \ccr 86  & \ccr 33  & \ccr 14 & \ccr 6  \\
			& \ccr AS            & \ccr 618  & \ccr 579  & \ccr 557  & \ccr 463 & \ccr 150 & \ccr 46  & \ccr 15 & \ccr 7  \\
			& \ccr AS-SNK        & \ccr 772  & \ccr 761  & \ccr 554  & \ccr 444 & \ccr 144 & \ccr 45  & \ccr 17 & \ccr 9  \\
			& \ccg AS-SNK-GenEO  & \ccg 28   & \ccg 27   & \ccg 27   & \ccg 26  & \ccg 23  & \ccg 20  & \ccg 17 & \ccg 9  \\
			& \ccr AS-NK         & \ccr 776  & \ccr 765  & \ccr 588  & \ccr 444 & \ccr 144 & \ccr 45  & \ccr 17 & \ccr 10 \\
			& \cco AS-NK-GenEO   & \cco 51   & \cco 49   & \cco 37   & \cco 25  & \cco 24  & \cco 20  & \cco 17 & \cco 10 \\
		\end{tabular}
	\end{center}
\end{table}

\Cref{tab:gamma_holes_both} (bottom) exhibits the mixed boundary conditions scenario with holes. The exacting challenge as $\gamma$ shrinks is reflected in the iteration counts, which fail to be robust for all methods except AS-SNK-GenEO, in which case the iteration counts tend to stabilise at around $28$ iterations, and to a lesser extent AS-NK-GenEO, where the iteration counts are under $60$. Note that for large values of $\gamma$ (namely $\gamma > 1$), the GenEO portion of the coarse space is empty, otherwise its size remains at a constant $762$ vectors for this particular problem.

\subsection{Heterogeneities}
\label{sec:heterogeneities}

In this section we detail experiments with heterogeneous problems similar to those in \cite{Hiptmair:2007:NSP}. The parameter $\gamma$ is kept constant at $\gamma=10^{-3}$ in order to retain a challenging problem. For the results in \Cref{tab:epsilon_alternating_both}, the beam is made of eight alternating layers with the parameter $\varepsilon$ alternating between a value of unity and a value $\varepsilon_\text{alt}$ which we vary, namely ranging between $10^{-4}$ and $10^4$, while $\mu$ is kept uniform throughout at a constant value $\mu=1$. In \Cref{tab:epsilon_alternating_both} (top), we consider Dirichlet boundary conditions on all six faces of the beam. We see that all methods perform very well and that, except for the first column, there is no contribution of the GenEO methodology to the coarse space.

\begin{table}[t!]
	\begin{center}
		\caption{Iteration counts required when varying $\varepsilon$ for the heterogeneous beam problem with alternating layers, Dirichlet (top) or mixed (bottom) boundary conditions, and $N=256$ subdomains. Here we have \#dofs $= 3869K$, NK size $= 592K$, and SNK size $= 813K$.}
		\label{tab:epsilon_alternating_both}
		\small
		\begin{tabular}{l|l|r|r|r|r|r|r|r|r|r}
			\multicolumn{2}{l|}{\hfill $\varepsilon_\text{alt}$} & $10^{-4}$ & $10^{-3}$ & $10^{-2}$ & $10^{-1}$ & $1$ & $10$ & $10^{2}$ & $10^{3}$ & $10^{4}$ \\ \hline
			\multicolumn{2}{l|}{GenEO size (Dirichlet)} & 9711 & 0  & 0  & 0  & 0  & 0  & 0  & 0  & 0  \\
			\multicolumn{2}{l|}{GenEO size (mixed)}     & 0   & 0   & 0   & 0   & 0   & 0   & 0   & 0  & 0  \\ \hline\hline
			\multirow[b]{4}{*}{\rotatebox[origin=c]{90}{\quad Dirichlet}}
			& \ccg AMS          & \ccg 7   & \ccg 7   & \ccg 7   & \ccg 7   & \ccg 7   & \ccg 7   & \ccg 7   & \ccg 8  & \ccg 10 \\
			& \ccg AS           & \ccg 14  & \ccg 14  & \ccg 14  & \ccg 14  & \ccg 14  & \ccg 14  & \ccg 15  & \ccg 15 & \ccg 17 \\
			& \ccg AS-SNK       & \ccg 15  & \ccg 15  & \ccg 15  & \ccg 15  & \ccg 15  & \ccg 15  & \ccg 15  & \ccg 15 & \ccg 15 \\
			& \ccg AS-SNK-GenEO & \ccg 15  & \ccg 15  & \ccg 15  & \ccg 15  & \ccg 15  & \ccg 15  & \ccg 15  & \ccg 15 & \ccg 15 \\
			& \ccg AS-NK        & \ccg 15  & \ccg 15  & \ccg 15  & \ccg 15  & \ccg 15  & \ccg 15  & \ccg 15  & \ccg 15 & \ccg 15 \\
			& \ccg AS-NK-GenEO  & \ccg 15  & \ccg 15  & \ccg 15  & \ccg 15  & \ccg 15  & \ccg 15  & \ccg 15  & \ccg 15 & \ccg 15 \\ \hline\hline
			\multirow[b]{4}{*}{\rotatebox[origin=c]{90}{\quad mixed}}
			& \ccg AMS          & \ccg 13  & \ccg 13  & \ccg 13  & \ccg 12  & \ccg 12  & \ccg 11  & \ccg 11  & \ccg 11 & \ccg 10 \\
			& \ccr AS           & \ccr 288 & \ccr 233 & \ccr 233 & \ccr 233 & \ccr 231 & \ccr 228 & \ccr 202 & \ccr 65 & \ccr 21 \\
			& \ccg AS-SNK       & \ccg 18  & \ccg 18  & \ccg 18  & \ccg 18  & \ccg 18  & \ccg 18  & \ccg 18  & \ccg 17 & \ccg 14 \\
			& \ccg AS-SNK-GenEO & \ccg 18  & \ccg 18  & \ccg 18  & \ccg 18  & \ccg 18  & \ccg 18  & \ccg 18  & \ccg 17 & \ccg 14 \\
			& \cco AS-NK        & \cco 43  & \cco 43  & \cco 43  & \cco 42  & \cco 42  & \cco 38  & \cco 33  & \cco 25 & \cco 16 \\
			& \cco AS-NK-GenEO  & \cco 43  & \cco 43  & \cco 43  & \cco 42  & \cco 42  & \cco 38  & \cco 33  & \cco 25 & \cco 16 \\
		\end{tabular}
	\end{center}
\end{table}

In \Cref{tab:epsilon_alternating_both} (bottom) we change the boundary conditions from Dirichlet everywhere to have Neumann conditions on two opposing lateral faces of the beam and, as $\varepsilon$ tends to zero, the problem demands a good preconditioner. AMS, AS-SNK and AS-SNK-GenEO all yield very robust iteration counts, however, AS-NK and AS-NK-GenEO require iterations counts that increase from $16$ to $43$ as $\varepsilon$ gets smaller. The one-level AS method is not robust at all, with iteration counts reaching $288$. Note that in this problem instance the GenEO coarse space is again empty and thus not required.

In the setup for \Cref{tab:mu_alternating_both}, the beam is still made of eight alternating layers but now $\varepsilon$ is fixed throughout at $\varepsilon=1$ and, instead, $\mu$ takes alternate values of unity and a value $\mu_\text{alt}$ between $10^{-4}$ and $10^4$. In \Cref{tab:mu_alternating_both} (top), Dirichlet boundary conditions are imposed on all faces of the beam whereas in \Cref{tab:mu_alternating_both} (bottom) they are again changed to Neumann boundary conditions on two opposing lateral faces of the beam. Numerical results suggest that both small and large extreme values for $\mu$ make the problem harder. In this scenario only AMS and the approaches enriched by GenEO, namely AS-SNK-GenEO and AS-NK-GenEO, have robust iteration counts. All other methods have iteration counts that can reach over $300$ for the hardest problems. In contrast to varying $\varepsilon$, we now see that the GenEO portion of the coarse space is essential and grows as the problem becomes more challenging, in particular when $\mu_\text{alt}$ increases to become very large. In our most extreme case of $\mu_\text{alt} = 10^{4}$ we require, on average, approximately $300$ eigenvectors per subdomain.

\begin{table}[ht!]
	\begin{center}
		\caption{Iteration counts required when varying $\mu$ for the heterogeneous beam problem with alternating layers, Dirichlet (top) or mixed (bottom) boundary conditions, and $N=256$ subdomains. Here we have \#dofs $= 3869K$, NK size $= 592K$, and SNK size $= 813K$. The out-of-memory (OOM) is not due to the size of the coarse space alone (indeed, enlarging it with the GenEO vectors removed this issue), but rather to limitations of the coarse solver used, which exceeded the available system memory in this case.}
		\label{tab:mu_alternating_both}
		\small
		\begin{tabular}{l|l|r|r|r|r|r|r|r|r|r}
			\multicolumn{2}{l|}{\hfill $\mu_\text{alt}$} & $10^{-4}$ & $10^{-3}$ & $10^{-2}$ & $10^{-1}$ & $1$ & $10$ & $10^{2}$ & $10^{3}$ & $10^{4}$ \\ \hline
			\multicolumn{2}{l|}{GenEO size (Dir.)} & 25K & 12K & 1272 & 0  & 0  & 0  & 1528 & 12K & 25K \\
			\multicolumn{2}{l|}{GenEO size (mixed)}     & 2560 & 2560 & 1272 & 0 & 0 & 0  & 1526 & 11K & 77K \\ \hline\hline
			\multirow[b]{4}{*}{\rotatebox[origin=c]{90}{\quad Dirichlet}}
			& \ccg AMS           & \ccg 13   & \ccg 12   & \ccg 11   & \ccg 9   & \ccg 7   & \ccg 9   & \ccg 11   & \ccg 12   & \ccg 14   \\
			& \ccr AS            & \ccr 394  & \ccr 213  & \ccr 76   & \ccr 25  & \ccr 14  & \ccr 25  & \ccr 76   & \ccr 211  & \ccr 380  \\
			& \ccr AS-SNK        & \ccr 41   & \ccr 42   & \ccr 34   & \ccr 21  & \ccr 15  & \ccr 22  & \ccr 54   & \ccr 150  & \ccr 335  \\
			& \ccg AS-SNK-GenEO  & \ccg 19   & \ccg 18   & \ccg 16   & \ccg 21  & \ccg 15  & \ccg 22  & \ccg 18   & \ccg 18   & \ccg 20   \\
			& \ccr AS-NK         & \ccr 46   & \ccr 44   & \ccr 34   & \ccr 21  & \ccr 15  & \ccr 22  & \ccr 55   & \ccr 154  & \ccr 340  \\
			& \ccg AS-NK-GenEO   & \ccg 32   & \ccg 19   & \ccg 17   & \ccg 21  & \ccg 15  & \ccg 22  & \ccg 18   & \ccg 19   & \ccg 20   \\ \hline\hline
			\multirow[b]{4}{*}{\rotatebox[origin=c]{90}{\quad mixed}}
			& \ccg AMS           & \ccg 13   & \ccg 13   & \ccg 13   & \ccg 13  & \ccg 12  & \ccg 12  & \ccg 12   & \ccg 13   & \ccg 14   \\
			& \ccr AS            & \ccr 581  & \ccr 407  & \ccr 275  & \ccr 236 & \ccr 231 & \ccr 233 & \ccr 264  & \ccr 378  & \ccr 523  \\
			& \ccr AS-SNK        & \ccr OOM  & \ccr 40   & \ccr 36   & \ccr 24  & \ccr 18  & \ccr 26  & \ccr 58   & \ccr 159  & \ccr 339  \\
			& \ccg AS-SNK-GenEO  & \ccg 19   & \ccg 19   & \ccg 18   & \ccg 24  & \ccg 18  & \ccg 26  & \ccg 19   & \ccg 19   & \ccg 20   \\
			& \ccr AS-NK         & \ccr 48   & \ccr 47   & \ccr 44   & \ccr 43  & \ccr 42  & \ccr 42  & \ccr 67   & \ccr 174  & \ccr 354  \\
			& \ccg AS-NK-GenEO   & \ccg 44   & \ccg 44   & \ccg 43   & \ccg 43  & \ccg 42  & \ccg 42  & \ccg 42   & \ccg 43   & \ccg 41   \\
		\end{tabular}
	\end{center}
\end{table}
In our final set of experiments we use a more geometrically complex heterogeneity. The problem is again defined in the whole beam but this time with $\varepsilon$ or $\mu$ varying from unity in the location of the holes used in previous tests, akin to those shown in \Cref{fig:holes} but now with a fixed $N=256$ subdomains. As before, we vary separately $\varepsilon$ and $\mu$ to distinguish their effects and label the value taken in the holes as $\varepsilon_\text{holes}$ or $\mu_\text{holes}$, respectively. We again consider pure Dirichlet and mixed boundary condition cases, as described previously. Results in \Cref{tab:epsilon_complex_both} detail the behaviour when $\varepsilon_\text{holes}$ varies between $10^{-4}$ and $10^4$ and $\mu$ is kept constant at $\mu=1$. We see that the performance of each method is rather similar to that observed in \Cref{tab:epsilon_alternating_both}, suggesting that the precise geometric variation of $\varepsilon$ is not too important.

\begin{table}[ht!]
	\begin{center}
		\caption{Iteration counts required when varying $\varepsilon$ for the heterogeneous beam problem with complex geometric heterogeneity which varies in the location of holes akin to \Cref{fig:holes}, Dirichlet (top) or mixed (bottom) boundary conditions, and $N=256$ subdomains. Here we have \#dofs $= 3869K$, NK size $= 592K$, and SNK size $= 813K$.}
		\label{tab:epsilon_complex_both}
		\small
		\begin{tabular}{l|l|r|r|r|r|r|r|r|r|r}
			\multicolumn{2}{l|}{\hfill $\varepsilon_\text{holes}$} & $10^{-4}$ & $10^{-3}$ & $10^{-2}$ & $10^{-1}$ & $1$ & $10$ & $10^{2}$ & $10^{3}$ & $10^{4}$ \\ \hline
			\multicolumn{2}{l|}{GenEO size (Dirichlet)} & 1023 & 0  & 0  & 0  & 0  & 0  & 0  & 0  & 0  \\
			\multicolumn{2}{l|}{GenEO size (mixed)}     & 0   & 0   & 0   & 0   & 0   & 0   & 0   & 0   & 0   \\ \hline\hline
			\multirow[b]{4}{*}{\rotatebox[origin=c]{90}{\quad Dirichlet}}
			& \ccg AMS           & \ccg 7   & \ccg 7   & \ccg 7   & \ccg 7   & \ccg 7   & \ccg 7   & \ccg 7   & \ccg 8   & \ccg 9   \\
			& \ccr AS            & \ccr 14  & \ccr 14  & \ccr 14  & \ccr 14  & \ccr 14  & \ccr 14  & \ccr 17  & \ccr 25  & \ccr 62  \\
			& \ccg AS-SNK        & \ccg 15  & \ccg 15  & \ccg 15  & \ccg 15  & \ccg 15  & \ccg 15  & \ccg 15  & \ccg 15  & \ccg 15  \\
			& \ccg AS-SNK-GenEO  & \ccg 15  & \ccg 15  & \ccg 15  & \ccg 15  & \ccg 15  & \ccg 15  & \ccg 15  & \ccg 15  & \ccg 15  \\
			& \ccg AS-NK         & \ccg 15  & \ccg 15  & \ccg 15  & \ccg 15  & \ccg 15  & \ccg 15  & \ccg 15  & \ccg 15  & \ccg 15  \\
			& \ccg AS-NK-GenEO   & \ccg 15  & \ccg 15  & \ccg 15  & \ccg 15  & \ccg 15  &\ccg  15  & \ccg 15  & \ccg 15  & \ccg 15  \\ \hline\hline
			\multirow[b]{4}{*}{\rotatebox[origin=c]{90}{\quad mixed}}
			& \ccg AMS           & \ccg 12  & \ccg 12  & \ccg 12  & \ccg 12  & \ccg 12  & \ccg 12  & \ccg 13  & \ccg 12  & \ccg 12  \\
			& \ccr AS            & \ccr 232 & \ccr 232 & \ccr 232 & \ccr 232 & \ccr 231 & \ccr 231 & \ccr 226 & \ccr 180 & \ccr 135 \\
			& \ccg AS-SNK        & \ccg 18  & \ccg 18  & \ccg 18  & \ccg 18  & \ccg 18  & \ccg 17  & \ccg 17  & \ccg 16  & \ccg 15  \\
			& \ccg AS-SNK-GenEO  & \ccg 18  & \ccg 18  & \ccg 18  & \ccg 18  & \ccg 18  & \ccg 17  & \ccg 17  & \ccg 16  & \ccg 15  \\
			& \cco AS-NK         & \cco 42  & \cco 42  & \cco 42  & \cco 42  & \cco 42  & \cco 41  & \cco 38  & \cco 33  & \cco 24  \\
			& \cco AS-NK-GenEO   & \cco 42  & \cco 42  & \cco 42  & \cco 42  & \cco 42  & \cco 41  & \cco 38  & \cco 33  & \cco 24  \\
		\end{tabular}
	\end{center}
\end{table}

In contrast, when varying $\mu_\text{holes}$ between $10^{-4}$ and $10^4$ in \Cref{tab:mu_complex_both}, with a fixed $\varepsilon=1$, we now observe some degradation in the performance of AMS, suggesting that this approach is not fully robust to complex heterogeneities. Our approach, AS-SNK-GenEO, yields the lowest iteration counts in the most challenging regime of large $\mu_\text{holes}$ and provides good behaviour overall, with iteration counts stabilising at around $25$ once the GenEO space is required. Note that the size of the GenEO coarse space adapts to the difficulty of the problem and thus ultimately increases with the value of $\mu_\text{holes}$. More precisely, the number of GenEO vectors is zero for moderate $\mu_\text{holes}$ until it reaches $10^2$, at which point the size of the GenEO space increases up to $15K$ once $\mu_\text{holes} = 10^{4}$, which equates to $62$ eigenvectors per subdomain on average. While AS-NK-GenEO also performs well in terms of robustness to varying $\mu$, both the one-level AS method and those without the enrichment of GenEO, that is AS-SNK and AS-NK, become much poorer for large values of $\mu_\text{holes}$.

\begin{table}[t!]
	\begin{center}
		\caption{Iteration counts required when varying $\mu$ for the heterogeneous beam problem with complex geometric heterogeneity which varies in the location of holes akin to \Cref{fig:holes}, Dirichlet (top) or mixed (bottom) boundary conditions, and $N=256$ subdomains. Here we have \#dofs $= 3869K$, NK size $= 592K$, and SNK size $= 813K$.}
		\label{tab:mu_complex_both}
		\small
		\begin{tabular}{l|l|r|r|r|r|r|r|r|r|r}
			\multicolumn{2}{l|}{\hfill $\mu_\text{holes}$} & $10^{-4}$ & $10^{-3}$ & $10^{-2}$ & $10^{-1}$ & $1$ & $10$ & $10^{2}$ & $10^{3}$ & $10^{4}$ \\ \hline
			\multicolumn{2}{l|}{GenEO size (Dirichlet)} & 25K & 5610 & 0  & 0   & 0   & 0   & 762 & 762  & 15K \\
			\multicolumn{2}{l|}{GenEO size (mixed)}     & 0   & 0   & 0   & 0   & 0   & 0   & 762 & 1016 & 15K \\ \hline\hline
			\multirow[b]{4}{*}{\rotatebox[origin=c]{90}{\quad Dirichlet}}
			& \cco AMS          & \cco 13  & \cco 12  & \cco 11  & \cco 8   & \cco 7   & \cco 10  & \cco 22  & \cco 56   & \cco 158   \\
			& \ccr AS           & \ccr 16  & \ccr 16  & \ccr 16  & \ccr 16  & \ccr 14  & \ccr 17  & \ccr 35  & \ccr 108  & \ccr 333   \\
			& \ccr AS-SNK       & \ccr 15  & \ccr 15  & \ccr 15  & \ccr 15  & \ccr 15  & \ccr 17  & \ccr 35  & \ccr 105  & \ccr 322   \\
			& \ccg AS-SNK-GenEO & \ccg 12  & \ccg 11  & \ccg 15  & \ccg 15  & \ccg 15  & \ccg 17  & \ccg 18  & \ccg 23   & \ccg 25    \\
			& \ccr AS-NK        & \ccr 15  & \ccr 15  & \ccr 15  & \ccr 15  & \ccr 15  & \ccr 17  & \ccr 35  & \ccr 105  & \ccr 322   \\
			& \ccg AS-NK-GenEO  & \ccg 17  & \ccg 13  & \ccg 15  & \ccg 15  & \ccg 15  & \ccg 17  & \ccg 18  & \ccg 23   & \ccg 25    \\ \hline\hline
			\multirow[b]{4}{*}{\rotatebox[origin=c]{90}{\quad mixed}}
			& \cco AMS          & \cco 18  & \cco 18  & \cco 16  & \cco 13  & \cco 12  & \cco 12  & \cco 20  & \cco 32   & \cco 43    \\
			& \ccr AS           & \ccr 216 & \ccr 216 & \ccr 216 & \ccr 219 & \ccr 231 & \ccr 241 & \ccr 247 & \ccr 299  & \ccr 448   \\
			& \ccr AS-SNK       & \ccr 18  & \ccr 18  & \ccr 18  & \ccr 18  & \ccr 18  & \ccr 23  & \ccr 45  & \ccr 119  & \ccr 287   \\
			& \ccg AS-SNK-GenEO & \ccg 18  & \ccg 18  & \ccg 18  & \ccg 18  & \ccg 18  & \ccg 23  & \ccg 26  & \ccg 24   & \ccg 25    \\
			& \ccr AS-NK        & \ccr 39  & \ccr 39  & \ccr 39  & \ccr 39  & \ccr 42  & \ccr 44  & \ccr 53  & \ccr 132  & \ccr 299   \\
			& \ccg AS-NK-GenEO  & \ccg 39  & \ccg 39  & \ccg 39  & \ccg 39  & \ccg 42  & \ccg 44  & \ccg 45  & \ccg 45   & \ccg 43    \\
		\end{tabular}
	\end{center}
\end{table}

\section{Conclusions}
\label{sec:conclusions}

In this work, we have both designed and tested a new two-level domain decomposition preconditioner for positive Maxwell equations inspired by the idea of subspace decomposition, but based on spectral coarse spaces. This was motivated by the AMS preconditioner of Hiptmair and Xu, although our approach is different in spirit. Our method performs well irrespective of the topology of the domain and which boundary conditions are imposed, as can be seen in a summary of trends exhibited by our experiments in \Cref{tab:summary}, where only our approach performs well in all scenarios. The numerical results corroborate our theory and show robustness also with respect to heterogeneous coefficients.

It is worth emphasising that the AMS preconditioner is highly efficient when applied to problems with simple geometries and no topological complexity (e.g., without holes). In such scenarios, AMS may well be preferable to domain decomposition methods, as it benefits from optimised implementations available in libraries such as PETSc. However, the performance of AMS as well as related multigrid techniques depend heavily on the choice of prolongation operators used, which in turn depend on the discretisation scheme. This can limit their applicability in evolving or heterogeneous simulation environments. In contrast, the AS-SNK-GenEO method is designed for robustness precisely in more challenging regimes, such as non-trivial topologies, mixed boundary conditions, and strong heterogeneities, where AMS may deteriorate significantly. A key strength of GenEO-based methods lies in their flexibility: the coarse space is constructed automatically through local generalised eigenvalue problems, adapting naturally to the problem at hand and to the discretisation. This makes our approach particularly useful in scenarios where no corresponding AMS theory or robust implementation exists, or when the discretisation scheme may change over the lifetime of the software. In this sense, AMS and GenEO-type methods are complementary, and hybrid or adaptive strategies could be explored in future work to combine their strengths.

\begin{table}[ht!]
	\begin{center}
		\caption{A summary of the trends exhibited in our numerical results for the different preconditioners. Recall that green indicates low and robust iteration counts, orange implies relatively low but not robust iteration counts, while red signals that iteration counts become large and lack robustness. Note: AMS is highly effective for simple geometries without holes, benefiting from available optimised implementations. In contrast, GenEO-type methods adapt automatically to the discretisation and are more robust in complex or evolving simulation settings.}
		\label{tab:summary}
		\small
		\begin{tabular}{l|l|l|c|c|c|c|c|c|c}
			Experiment & Geometry & BCs & Table & \rotatebox[origin=l]{90}{AMS} & \rotatebox[origin=l]{90}{AS} & \rotatebox[origin=l]{90}{AS-SNK} & \rotatebox[origin=l]{90}{AS-SNK-GenEO} & \rotatebox[origin=l]{90}{AS-NK} & \rotatebox[origin=l]{90}{AS-NK-GenEO} \\ \hline\hline
			\multirow{4}{*}{weak scalability} & \multirow{2}{*}{no holes} & Dirichlet & \multirow{2}{*}{\ref{tab:homogeneous_beam_both}} & \ccg & \ccg & \ccg & \ccg & \ccg & \ccg \\
			& & mixed & & \ccg & \ccr & \ccg & \ccg & \cco & \cco \\
			\cline{2-9}
			& \multirow{2}{*}{holes} & Dirichlet & \multirow{2}{*}{\ref{tab:beam_holes_both}} & \ccr & \cco & \ccg & \ccg & \ccg & \ccg \\
			& & mixed & & \ccr & \ccr & \ccr & \ccg & \ccr & \cco \\
			\hline
			increasing holes & fixed domain & Dirichlet & \ref{tab:cube} & \ccg & \cco & \ccg & \ccg & \cco & \cco \\
			\hline
			\multirow{2}{*}{strong scalability} & \multirow{2}{*}{holes} & Dirichlet & \multirow{2}{*}{\ref{tab:metis_partitioning_both}} & \cco & \ccr & \ccg & \ccg & \cco & \cco \\
			& & mixed & & \ccr & \ccr & \cco & \ccg & \ccr & \ccr \\
			\hline
			\multirow{3}{*}{varying $\gamma$} & no holes & Dirichlet & \ref{tab:gamma_no_hole_Dirichlet} & \ccg & \ccg & \ccg & \ccg & \ccg & \ccg \\
			\cline{2-9}
			& holes & Dirichlet & \multirow{2}{*}{\ref{tab:gamma_holes_both}} & \ccr & \cco & \ccg & \ccg & \ccg & \ccg \\
			& holes & mixed & & \ccr & \ccr & \ccr & \ccg & \ccr & \cco \\
			\hline
			\multirow{4}{*}{heterogeneity varying $\varepsilon$} & \multirow{2}{*}{simple} & Dirichlet & \multirow{2}{*}{\ref{tab:epsilon_alternating_both}} & \ccg & \ccg & \ccg & \ccg & \ccg & \ccg \\
			& & mixed & & \ccg & \ccr & \ccg & \ccg & \cco & \cco \\
			\cline{2-9}
			& \multirow{2}{*}{complex} & Dirichlet & \multirow{2}{*}{\ref{tab:epsilon_complex_both}} & \ccg & \ccr & \ccg & \ccg & \ccg & \ccg \\
			& & mixed & & \ccg & \ccr & \ccg & \ccg & \cco & \cco \\
			\hline
			\multirow{4}{*}{heterogeneity varying $\mu$} & \multirow{2}{*}{simple} & Dirichlet & \multirow{2}{*}{\ref{tab:mu_alternating_both}} & \ccg & \ccr & \ccr & \ccg & \ccr & \ccg \\
			& & mixed & & \ccg & \ccr & \ccr & \ccg & \ccr & \ccg \\
			\cline{2-9}
			& \multirow{2}{*}{complex} & Dirichlet & \multirow{2}{*}{\ref{tab:mu_complex_both}} & \cco & \ccr & \ccr & \ccg & \ccr & \ccg \\
			& & mixed & & \cco & \ccr & \ccr & \ccg & \ccr & \ccg \\
		\end{tabular}
	\end{center}
\end{table}

While several variants of our preconditioner are considered, namely an underlying additive Schwarz method with a split near-kernel approach or a global near-kernel approach, both with or without GenEO enrichment, other forms could also be defined and then analysed at the theoretical level, such as those based on an underlying SORAS method or through the use of inexact solvers at the coarse level, as detailed in the preliminary exposition \cite{bootland2020twolevel}.

Presently, we have shown an approach which is essentially a proof of concept, with many directions for further improvement in terms of efficiency. In particular, the coarse space is typically rather sizeable since it contains all gradients of scalar functions, plus a small contribution of local eigenvectors coming from a spectral GenEO coarse space. Nevertheless, the coarse space size is typically five times smaller than the original problem and the gradient part of the coarse matrix retains the sparsity pattern of the underlying finite element discretization. This should enable the design of fast coarse problem solvers. One possible solution is to employ inexact or hierarchical coarse solvers. Such strategies have already received some theoretical backing \cite{bootland2020twolevel} and will be explored in future work.

Our proposed preconditioner must ultimately be tested on possibly highly irregular or heterogeneous problems arising from concrete applications. Its effectiveness should also be investigated for low-frequency Maxwell problems following the framework of \cite{li2012parallel}, where it can be applied within a block preconditioner.

Finally, we note also that the theoretical framework in \Cref{sec:ASwithFSL} is written in an abstract way and is not specific to the positive Maxwell problem. Our approach could thus be applied numerically to any other situation where the discrete problem has a coefficient matrix with a large near-kernel that can be efficiently computed.

\section*{Acknowledgments}

This work was granted access to the HPC resources of IRENE@TGCC under the allocation 2023-067730 granted by GENCI.\\
We also thank Clemens Pechstein (Dassault Systèmes) for his helpful comments on the manuscript and his insights on the AMS preconditioner.

\section*{Data access statement}

Data sharing is not applicable to this article as no datasets were generated or analysed during this study.

\bibliographystyle{siamplain}
\bibliography{refs}
	
\end{document}